\documentclass[12pt]{amsart}
\usepackage{amsmath, amssymb, amsfonts}
\usepackage{mathrsfs}
\usepackage{graphicx}
\usepackage{geometry}
\usepackage{amsthm}
\usepackage{hyperref}
\usepackage{indentfirst}
\numberwithin{equation}{section}

\newtheorem{theorem}{Theorem}[section]
\newtheorem{proposition}[theorem]{Proposition}
\newtheorem{lemma}[theorem]{Lemma}
\newtheorem{corollary}[theorem]{Corollary}

\newcommand{\gl}{\mathfrak{gl}}

\DeclareMathOperator{\qdet}{qdet}
\DeclareMathOperator{\sgn}{sgn}

\begin{document}
	
	\title[Central Elements and Determinantal Identities in the Elliptic Quantum Algebra \( \mathcal{A}_{q,p}(\widehat{\mathfrak{gl}}_N)\)]{Central Elements and Determinantal Identities in the Elliptic Quantum Algebra \( \mathcal{A}_{q,p}(\widehat{\mathfrak{gl}}_N) \)}
	\author{Yingjie Hu}
	\address{School of Mathematics and Statistics,  Central China Normal University, Wuhan, Hubei 430079, China}
	\email{yjhu@ccnu.edu.cn}
	\author{Zheng Li}
	\address{School of Artificial Intelligence, Jianghan University, Wuhan, Hubei 430056, China}
	\email{lz1994@jhun.edu.cn}
	\author{Jian Zhang}
	\address{School of Mathematics and Statistics,  Central China Normal University, Wuhan, Hubei 430079, China}
	\email{jzhang@ccnu.edu.cn}
	\thanks{{\scriptsize
			\hskip -0.6 true cm MSC (2020): Primary: 17B37; Secondary: 81R10.
			\newline Keywords: elliptic quantum algebra, quantum determinant, Liouville-type theorem, determinantal identities.		
	}}
	\maketitle
	
\begin{abstract}
Elliptic quantum algebra is the algebraic structure characterized by the elliptic solution of the Yang-Baxter equation.
In this paper, we construct a family of central elements \( \mathfrak{z}(z) \) for the elliptic quantum algebra \(\mathcal{A}_{q,p}(\widehat{\mathfrak{gl}}_{N})\) and show that they can be  expressed as quantum determinants, yielding an elliptic analogue of the Liouville formula. In addition, we establish determinantal identities, including Jacobi's ratio theorem and Sylvester's theorem.
\end{abstract}
	
\section{Introduction}
	
In analogy with the rational and trigonometric cases, which produce the Yangian and quantum affine algebras respectively, the elliptic solution of the Yang-Baxter equation discovered by
Baxter \cite{Baxter 1982} give rise to elliptic quantum  algebra.
These are divided into two classes, namely the vertex type and the face type, according to the corresponding elliptic solution. Vertex-type elliptic quantum algebras were introduced in \cite{FIJKMY 1994,FIJKMY 1995}, while the face-type counterparts were studied in \cite{JKOS 1999}.
Elliptic quantum algebras and their representation theory lie at the intersection of quantum integrable systems, quantum groups, and vertex operator algebras \cite{Belavin 1981,JKOS 1999}.

Unlike Yangians and quantum affine algebras carrying only one deformation parameter $q$, the vertex-type elliptic quantum algebras $\mathcal{A}_{q,p}(\widehat{\mathfrak{gl}}_N)$ depend on two independent deformation parameters $q$ and $p$. Their R-matrices incorporate Jacobi theta functions and infinite $q$-Pochhammer symbols.
A fundamental structural correspondence between $\mathcal{A}_{q,p}(\widehat{\mathfrak{gl}}_N)$ and its trigonometric counterpart $\rm{U}_q(\widehat{\mathfrak{gl}}_N)$ emerges in the non-elliptic limit $p \to 0$ \cite{FIR 2018}. Under this degeneration, the full elliptic $R$-matrix collapses to a twisted trigonometric solution $R'(z)$, which coincides with a deformed version of the principal-gradation universal $R$-matrix of $\rm{U}_q(\widehat{\mathfrak{gl}}_N)$. The map connecting $R'(z)$ to the untwisted principal trigonometric $R$-matrix is mediated by a constant diagonal universal twist $\mathcal{F}_{12}$, the same Drinfeld element that distorts the standard Hopf coproduct of $\rm{U}_q(\widehat{\mathfrak{gl}}_N)$ to produce the quasi-coproduct structure of the elliptic algebra. Concretely, this twist simultaneously reparametrises the Lax matrix generators of the quantum affine algebra, recasting them as the Lax operators appearing in the non-elliptic limiting realisation of $\mathcal{A}_{q,p}(\widehat{\mathfrak{gl}}_N)$. For further developments about the other type elliptic quantum algebras, see \cite{KK 2003,Kon 2009,Kon 2018}.

The center of \( \mathcal{A}_{q,p}(\widehat{\mathfrak{gl}}_N) \)  plays a key role in the separation of variables and the construction of eigenfunctions for integrable Hamiltonians.
For Yangians \( \rm{Y}(\mathfrak{gl}_N) \) and quantum affine algebras $\rm{U}_q(\widehat{\gl}_{N})$, the center can be constructed via quantum determinants using $R$ matrix  realization. Numerous determinantal identities  have been established over the years; see, for example, \cite{KS 1982,Molev 2007,MNO 1996}. In the elliptic quantum algebra \( \mathcal{A}_{q,p}(\widehat{\mathfrak{gl}}_N) \), quantum determinants were introduced by Frappat, Issing and Ragoucy \cite{FIR 2018} and they proved that the coefficients of quantum determinants belong to the center of \( \mathcal{A}_{q,p}(\widehat{\mathfrak{gl}}_N) \).
	For $N=2$, Avan, Frappat and Ragoucy \cite {AFR 2017} constructed another family of central elements which can be expressed in the quantum determinants
	by a Liouville-type theorem.	In this paper, we generalize the Liouville-type theorem to arbitrary $N$.  In addition, we establish determinantal identities, including Jacobi's ratio theorems and Sylvester's identities, extending the corresponding results for (twisted) Yangians and quantum affine (super-)algebras \cite{MNO 1996, Molev 2007,JLZ 2025,JZ 2024,JZ 2025}.
	
	Our findings demonstrate a structural parallel between
	between \( \mathcal{A}_{q,p}(\widehat{\mathfrak{gl}}_N) \) and Yangians, while also revealing new features due to the elliptic   \(R\)-matrix. They provide a systematic algebraic foundation for future studies of representation theory, integrable hierarchies, and deformed \( \mathcal{W}_N \)-algebras associated with elliptic quantum algebras.
	
	The plan of the paper is as follows. In Section \ref{sec:def}, we
	review the definition of \( \mathcal{A}_{q,p}(\widehat{\mathfrak{gl}}_N) \) via the \(RLL\) relations and the \(L\)-operator formalism,
	as well as basic properties of the elliptic \(R\)-matrix.
	In Section \ref{sec:Liouville}, we construct the power series \( \mathfrak{z}(z) \) whose coefficients are central  and prove that  \( \mathfrak{z}(z) \) coincides with the ratio of quantum determinants, yielding an elliptic analogue of the quantum Liouville formula. Section \ref{sec:minor} is devoted to determinantal identities, including the factorization formula for the quantum determinant, analogues of Jacobi's ratio theorem and Sylvester's theorem.
	
	\section{Prelimilaries}\label{sec:def}
	
	In this section, we recall some preliminaries on the $N$-elliptic $R$-matrix and the elliptic quantum algebra $\mathcal{A}_{q,p}(\widehat{\mathfrak{gl}}_N)$.
	
	\subsection{$N$-elliptic $R$-matrix}
	The $N$-elliptic $R$-matrix \cite{AFR 2017,AFR 2019,AFRS 1999} is defined as
	\[
	R(z)=\sum R_{a,c}^{b,d}(z) \, e_{a,b} \otimes e_{c,d},
	\] whose non-vanishing entries obey $a+c=b+d$, the addition of indices being understood modulo $N$.
	For $1 \le a,b,c,d \le N$, one defines
	\begin{equation}
		R_{a,c}^{b,d}(z) = \eta (z) S_{a,c}^b(z) \, \omega^{(a+c-b-d)/2} \, \delta_{a+c,b+d}^{(\text{mod}\;N)} \;,
	\end{equation}
	where
	\begin{equation}\label{RMatrixAN}
		S_{a,c}^b(z) = z^{\frac{2(b-a)}{N}} q^{\frac{2(c-b)}{N}} p^{\frac{(b-a)(c-b)}{N}}\frac{\Theta_{p^N}(p^{N+c-a} q^2 z^2)}{\Theta_{p^N}(p^{N+c-b} z^2)\Theta_{p^N}(p^{N+b-a} q^2)} \;.
	\end{equation}
	Here $\omega=e^{2i\pi/N}$, which implies $\omega^{(a+c-b-d)/2}$ equals $\pm 1$.
	This factor results from the gauge transformation done on the Belavin $R$-matrix that allows one to recover the original $\mathcal{A}_{q,p}(\widehat{\mathfrak{gl}}_2)$ matrix.
	
	The normalization coefficient $\eta(z)$ is given by
	\begin{equation}
		\eta(z) = \frac{z^{\frac{2}{N}}}{\kappa_N(z^2)} \frac{(p^N,p^N)^3_{\infty}}{(p,p)^3_{\infty}}\frac{\Theta_{p}(q^2)\Theta_{p}(p z^2)}{\Theta_{p}(q^2 z^2)}
\end{equation}
with
\begin{equation}
\frac{1}{\kappa_N(z^2)} =
\frac{(q^{2N} z^{-2};p,q^{2N})_{\infty} (q^{2} z^{2};p,q^{2N})_{\infty} (p z^{-2};p,q^{2N})_{\infty}
(p q^{2N-2} z^{2};p,q^{2N})_{\infty}}{(q^{2N} z^{2};p,q^{2N})_{\infty} (q^{2} z^{-2};p,q^{2N})_{\infty}
(p z^{2};p,q^{2N})_{\infty} (p q^{2N-2} z^{-2};p,q^{2N})_{\infty}} \;.
\label{kappa}
\end{equation}
The matrix $S$ is $\mathbb{Z}_N$-symmetric by construction, i.e. the coefficients satisfy $S_{a+n,c+n}^{b+n}(z)=S_{a,c}^b(z)$, $n=1,...,N$, where again the addition of indices is modulo $N$.
Here, $\Theta_p(z)$ denotes the Jacobi theta function defined by ($p \in \mathbb{C}$ such that $|p|<1$)
\begin{equation}\label{theta:prod}
\Theta_p(z) = (z;p)_\infty \, (pz^{-1};p)_\infty \, (p;p)_\infty
\end{equation}
and the infinite $q$-Pochhammer symbols are given by
\begin{equation}
(z;p_1,\dots,p_m)_\infty = \prod_{n_i \ge 0} (1-z p_1^{n_1} \dots p_m^{n_m}) \,.
\end{equation}
The Jacobi theta function enjoys the following properties:
\begin{align}\label{id:theta}
& \Theta_{a}(az) = \Theta_{a}(z^{-1})
\text{ and }
\Theta_{a}(a^nz) = \frac{(-1)^n}{z^n\,a^{n(n-1)/2}}\,\Theta_{a}(z)\,,\quad \forall\ n\in\mathbb{Z},  \\
& \Theta_{a^2}(az) = \Theta_{a^2}(az^{-1}).
\label{id:theta2}
\end{align}

	The $N$-elliptic $R$ matrices $\widehat{R}(z,p,q)\equiv \widehat{R}(z)$ is defined by
	\begin{equation}
		\widehat{R}(z) = \tau_N(q^{\frac{1}{2}} z^{-1}) R(z) \,,
		\label{RealR}
	\end{equation}
	where
	\begin{equation}
		\tau_N(z)=z^{\frac{2}{N}-2} \frac{\Theta_{q^{2N}}(q z^2)}{\Theta_{q^{2N}}(q z^{-2})} \,.
		\label{rescale}
	\end{equation}
	
	The $R$-matrix $R(z)$ satisfies the following properties \cite{AFRS 1999,RT 1986,Tracy 1985}:
	\begin{itemize}
		\item Yang-Baxter equation (also holds for $\widehat{R}(z)$):
		\[
		R_{12}\left(\frac{z_1}{z_2}\right) R_{13}\left(\frac{z_1}{z_3}\right) R_{23}\left(\frac{z_2}{z_3}\right)
		= R_{23}\left(\frac{z_2}{z_3}\right) R_{13}\left(\frac{z_1}{z_3}\right) R_{12}\left(\frac{z_1}{z_2}\right).
		\]
		
		\item Unitarity:
		\[
		R_{12}(z) R_{21}(z^{-1}) = \mathbb{I},
		\]
		
		\item Regularity ($P_{12} = \sum e_{i,j} \otimes e_{j,i}$ is the permutation matrix):
		\[
		R_{12}(1) = P_{12},
		\]
		
		\item Crossing-symmetry:
		\[
		R_{12}(z)^{t_2} R_{21}(z^{-1} q^{-N})^{t_2} = \mathbb{I},
		\]
		
		\item Antisymmetry:
		\[
		R_{12}(-z) = \omega \, (g^{-1} \otimes \mathbb{I}) \, R_{12}(z) \, (g \otimes \mathbb{I}),
		\]
		
		\item Quasi-periodicity:
		\[
		\widehat{R}_{12}(-z p^{\frac{1}{2}}) = (g^{\frac{1}{2}} h g^{\frac{1}{2}} \otimes \mathbb{I})^{-1} \widehat{R}_{21}(z^{-1})^{-1} (g^{\frac{1}{2}} h g^{\frac{1}{2}} \otimes \mathbb{I}),
		\]
		
		\item Invariance:
		\[
		(h \otimes h) R_{12}(z) = R_{12}(z) (h \otimes h).
		\]
	\end{itemize}
	
	The crossing-symmetry and the unitarity properties of $R_{12}$ allow one to exchange the inversion and the transposition when applied to the matrix $R_{12}$ (or to the matrix $\widehat{R}_{12}$). It provides a crossing-unitarity relation (also valid for $\widehat{R}$ due to the $q^N$-periodicity of the function $\tau_N$):
	\begin{equation}\label{2.8}
		\left(R_{12}(x)^{t_2}\right)^{-1} = \left(R_{12}(q^N x)^{-1}\right)^{t_2}.
	\end{equation}
	
	Note also that the unitarity property for $\widehat{R}_{12}$ reads
	\[
	\widehat{R}_{12}(z) \widehat{R}_{21}(z^{-1}) = \tau_N(q^{\frac{1}{2}} z) \tau_N(q^{\frac{1}{2}} z^{-1}) \equiv \mathcal{U}(z),
	\]
	where the function $\mathcal{U}(z)$ is defined as
	\[
	\mathcal{U}(z) = q^{\frac{2}{N}-2} \frac{\Theta_{q^{2N}}(q^2 z^2) \Theta_{q^{2N}}(q^2 z^{-2})}{\Theta_{q^{2N}}(z^2) \Theta_{q^{2N}}(z^{-2})}.
	\]
	
	\subsection{Elliptic quantum algebra $\mathcal{A}_{q,p}(\widehat{\mathfrak{gl}}_N)$}
	Let us give the definition of the elliptic quantum algebra
	$\mathcal{A}_{q,p}(\widehat{\mathfrak{gl}}_N)$ \cite{AFR 2017,AFR 2019,AFRS 1999}.
	Consider the free associative algebra generated by the operators
	$L_{ij}[n]$ where $i,j \in \mathbb{Z}_N$ and $n \in \mathbb{Z}$, and we define
	the formal series
	\begin{equation}
		L_{ij}(z) = \sum_{n \in \mathbb{Z}} L_{ij}[n] z^n ,
	\end{equation}
	encapsulated into a $N \times N$ matrix
	\begin{equation}
		L(z) =
		\begin{pmatrix}
			L_{11}(z) & \cdots & L_{1N}(z) \\
			\vdots    & \ddots & \vdots    \\
			L_{N1}(z) & \cdots & L_{NN}(z)
		\end{pmatrix}.
	\end{equation}
	One defines $\mathcal{A}_{q,p}(\widehat{\mathfrak{gl}}_N)$ by imposing the following constraints on
	$L_{ij}(z)$:
	\begin{equation}\label{2.3}
		\widehat{R}_{12}(\frac{z}{w})\, L_1(z)\, L_2(w)
		= L_2(w)\, L_1(z)\, \widehat{R}^*_{12}(\frac{z}{w}),
	\end{equation}
	where $L_1(z) \equiv L(z) \otimes \mathbb{I}$,
	$L_2(z) \equiv \mathbb{I} \otimes L(z)$,
	$\widehat{R}_{12}(z) \equiv \widehat{R}_{12}(z,q,p)$ is the $N$-elliptic $R$-matrix,
	and
	\[
	\widehat{R}^*(z) = \widehat{R}(z)\big|_{p \to p^*=pq^{-2c}}.
	\]
	
	It is useful to introduce the following two matrices:
	\begin{equation}
		L^+(z) = L\!\left(q^{\frac{c}{2}} z\right),
	\end{equation}
	\begin{equation}
		L^-(z) = \bigl(g^{\frac{1}{2}} h g^{\frac{1}{2}}\bigr)\,
		L\!\left(-p^{\frac{1}{2}} z\right)\,
		\bigl(g^{\frac{1}{2}} h g^{\frac{1}{2}}\bigr)^{-1}.
	\end{equation}
	Let $g$ and $h$ be the matrices of order $N$ defined by $g_{ij} = \omega^i \delta_{ij}$ and $h_{ij} = \delta_{i+1,j}$ for $1 \leq i,j \leq N$ with $\omega = e^{2i\pi/N}$, the addition of indices being understood modulo $N$. They obey coupled exchange relations following from \eqref{2.3}, periodicity condition and unitarity property of the matrices
	$\widehat{R}_{12}$ and $\widehat{R}^*_{12}$:
	\begin{equation}\label{2.6}
		\widehat{R}_{12}(\frac{z}{w})\, L_1^\pm(z)\, L_2^\pm(w)
		= L_2^\pm(w)\, L_1^\pm(z)\, \widehat{R}^*_{12}(\frac{z}{w}),
	\end{equation}
	\begin{equation}\label{2.7}
		\widehat{R}_{12}(q^{\frac{c}{2}} z/w)\, L_1^+(z)\, L_2^-(w)
		= L_2^-(w)\, L_1^+(z)\, \widehat{R}^*_{12}(q^{-\frac{c}{2}} z/w).
	\end{equation}
	

	\section{Liouville Formula}\label{sec:Liouville}
	
	In this section we will give a family of central elements in $\mathcal{A}_{q,p}(\widehat{\mathfrak{gl}}_N)$ and show that they are
	related to the quantum determinant by Liouville-type Formula.
	\begin{proposition}\label{prop:cent}
		There exist a series \( \mathfrak{z}(z) \) in z with coefficients in the algebra $\mathcal{A}_{q,p}(\widehat{\mathfrak{gl}}_N)$ such that
		\begin{equation}\label{3.2}
			L(z)^t (L(q^N z)^{-1})^t = \mathfrak{z}(z)\mathbf 1,
		\end{equation}
		and
		\begin{equation}\label{3.3}
			(L(q^N z)^{-1})^t L(z)^t =\mathfrak{z}(z)\mathbf 1,
		\end{equation}
		where $\mathbf 1$ denotes the identity matrix.
		Moreover, the coefficients of the series \( \mathfrak{z}(z) \) belong to the center of the algebra \( \mathcal{A}_{q,p}(\widehat{\mathfrak{gl}}_N) \).
	\end{proposition}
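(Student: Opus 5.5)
We follow the standard Yangian argument (as in Molev's treatment of the quantum Liouville formula), adapted to the pair $\widehat{R},\widehat{R}^*$. Start from the defining relation \eqref{2.3} and rewrite it as
\[
L_1(z)^{-1}\widehat{R}_{12}\!\left(\tfrac{z}{w}\right)^{-1} L_2(w) \;=\; L_2(w)\,\widehat{R}^*_{12}\!\left(\tfrac{z}{w}\right)^{-1} L_1(z)^{-1}.
\]
Here we assume $L(z)$ is invertible as a matrix series, which is implicit in the statement. Then apply the partial transposition $t_1$. The transposition does not respect products in space $1$, so we use the crossing-unitarity relation \eqref{2.8}, which holds for both $\widehat{R}$ and $\widehat{R}^*$ because $\tau_N$ is $q^N$-periodic. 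This converts the inverse of the transpose into the transpose of the inverse at the shifted argument $q^N z/w$. The outcome is a relation of the form
\[
\widehat{R}_{12}(q^N z/w)^{t_1\,\cdot}\, (L_1(q^Nz)^{-1})^{t_1} L_2(w) \;=\; L_2(w)\,(L_1(q^N z)^{-1})^{t_1}\, \widehat{R}^{*}_{12}(q^N z/w)^{t_1\,\cdot}.
\]
The $\cdot$ indicates the appropriately inverted and transposed $R$-matrices; their exact form will be recorded carefully. In the same way we obtain a relation between $L_1(z)^{t_1}$ and $L_2(w)$.

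Next, specialize $w$ so that the twisted $R$-matrix degenerates. By regularity $R(1)=P$, the partial transpose $P^{t_1}=Q=\sum e_{ij}\otimes e_{ij}$ is proportional to a rank-one projector. Concretely, we combine the relation for $L^{t}$ with the one for $(L^{-1})^{t}$ and evaluate at the point where the crossed $R$-matrix becomes proportional to $Q$, namely $z/w$ such that $q^N z/w$ or $z/w$ equals $1$. Then $Q\,A_1B_2\,Q$-type identities force $L(z)^t(L(q^Nz)^{-1})^t$ to be a scalar matrix $\mathfrak{z}(z)\mathbf 1$. Equivalently, $QL_1(z)^{t}\,(L_1(q^Nz)^{-1})^{t}$ equals $Q$ times something acting only in the algebra, and contracting gives \eqref{3.2}. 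Relation \eqref{3.3} follows in the same manner, or alternatively from \eqref{3.2}: if $AB=\mathfrak{z}\mathbf 1$ with $\mathfrak{z}$ invertible (its leading coefficient is invertible), then $B=\mathfrak{z}A^{-1}$. It remains to check that $BA=\mathfrak{z}\mathbf 1$, which requires knowing $\mathfrak{z}$ commutes with the entries of $A$. We therefore prove centrality first, or derive \eqref{3.3} by the symmetric specialization.

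For centrality, multiply the two exchange relations obtained in the first step, one for $L_1(z)^{t_1}$ and one for $(L_1(q^Nz)^{-1})^{t_1}$, against $L_2(w)$. The $R$-factors introduced at the shifted arguments are designed to cancel, using the crossing-unitarity relation \eqref{2.8} once more. This yields $\mathfrak{z}(z)\,L_2(w)=L_2(w)\,\mathfrak{z}(z)$ up to a scalar ratio of $R$-matrix normalizations. That ratio must be shown to equal $1$; this is where the $\widehat{R}$ versus $\widehat{R}^*$ mismatch ($p$ versus $p^*$) matters. The transposed products satisfy
\[
\widehat{R}(x)^{t_1}\bigl(\widehat{R}(q^Nx)^{-1}\bigr)^{t_1}=1,
\]
and the same holds separately for the starred matrices, so the factors cancel within each side.

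\textbf{Main obstacle.} The delicate step is the bookkeeping of transpositions with the inverse $L^{-1}$ and the two different elliptic nomes. In particular, one must confirm that \eqref{2.8} holds verbatim for $\widehat{R}^*$ with the same shift $q^N$; this is true, since the shift does not involve $p$. One must also find the precise specialization at which the crossed $R$-matrix becomes proportional to $Q$.
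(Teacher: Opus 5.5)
This is essentially the paper's argument with the two tensor factors interchanged. The paper transposes $L_2(w)^{-1}$ in the second factor, rewrites the $R$-factors by \eqref{2.8}, sets $w=q^Nz$ so that $(\widehat R_{12}(1)^{-1})^{t_2}\propto Q$, and then proves $L_1(w)\mathfrak z(z)=\mathfrak z(z)L_1(w)$ using \eqref{3.3}. Your version, commuting $\mathfrak z(z)$ past $L_2(w)$ using \eqref{3.2}, works just as well: the factors $\widehat R^*_{12}(z/w)^{t_1}$ and $(\widehat R^*_{12}(z/w)^{t_1})^{-1}$ cancel outright, so no scalar ratio of normalizations ever arises.

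The one thing to pin down is the specialization. In your transposed relation $(\widehat R_{12}(z/w)^{-1})^{t_1}(L_1(z)^{-1})^{t}L_2(w)=L_2(w)(L_1(z)^{-1})^{t}(\widehat R^*_{12}(z/w)^{-1})^{t_1}$, putting $z=w$ only gives $Q=Q$. You must first use \eqref{2.8} to rewrite these factors as $(\widehat R_{12}(q^{-N}z/w)^{t_1})^{-1}$ and its starred analogue, multiply them across, and set $w=q^{-N}z$, where $\widehat R_{12}(1)^{t_1}\propto Q$. Then \eqref{3.2} and \eqref{3.3} both follow from the two sides of the single identity $XQ=QY=\mathfrak z(z)Q$. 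Your alternative route to \eqref{3.3}, which needs $L(z)^t$ and $\mathfrak z(z)$ to be invertible, is therefore unnecessary.
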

	
	\begin{proof}
		The proof is a analogue of Proposition 2.1 in \cite{JLM 2024}.
		Multiplying both sides of the defining relation equation \eqref{2.3} by \( L_2(w)^{-1} \) and applying transposition $t_2$, we derive the relation
		\[
		\widehat{R}_{12}\left( \frac{z}{w} \right)^{t_2} (L_2(w)^{-1})^t L_1(z) = L_1(z) (L_2(w)^{-1})^t \widehat{R}_{12}^*\left( \frac{z}{w} \right)^{t_2},
		\]
		and hence
		\begin{equation}\label{3.4}
			\left(\widehat{R}_{12}\!\left(\frac{z}{w}\right)^{t_2}\right)^{-1}
			L_1(z)(L_2(w)^{-1})^t
			=
			(L_2(w)^{-1})^t L_1(z)
			\left(\widehat{R}_{12}^*\!\left(\frac{z}{w}\right)^{t_2}\right)^{-1}.
		\end{equation}
		
		Using the property of the \( R \)-matrix \eqref{2.8}, which also holds for \( \widehat{R} \)
		\[
		\left( \widehat{R}_{12}(x)^{t_2} \right)^{-1} = \left( \widehat{R}_{12}(q^N x)^{-1} \right)^{t_2}.
		\]
		The matrix $\widehat{R}_{12}^*$ obeys also the unitarity, crossing-symmetry, antisymmetry and quasi-periodicity conditions, thus we have
		\[
		\left( \widehat{R}_{12}\left( \frac{q^N z}{w} \right)^{-1} \right)^{t_2} L_1(z) (L_2(w)^{-1})^t = (L_2(w)^{-1})^t L_1(z) \left( \widehat{R}_{12}^*\left( \frac{q^N z}{w} \right)^{-1} \right)^{t_2}.
		\]
		
		Moreover,
		\[
		R_{12}(1)= P, \quad
		\widehat{R}_{12}(1)= \tau_N(q^{-1/2})P, \quad
		\widehat{R}_{12}^*(1)
		= \widehat{R}_{12}(1)\big|_{p\to p^*}
		= \tau_N(q^{-1/2})P .
		\]
		Now, set \( w = q^N z \) and use the fact that \( P^{-1} = P \) and \( P^{t_2} = Q \), where \( Q = \sum e_{i,j} \otimes e_{i,j} \), we obtain
		\[
		(L_2(q^N z)^{-1})^t L_1(z) Q = Q L_1(z) (L_2(q^N z)^{-1})^t.
		\]
		
		Since \( Q \) has a one-dimensional image, both sides must be equal to \( Q \mathfrak{z}(z) \) for some series \( \mathfrak{z}(z) \) with coefficients in the elliptic quantum algebra \( \mathcal{A}_{q,p}(\widehat{\mathfrak{gl}}_N) \). Using the relations $QX_{1}=QX_{2}^t$ and $X_{1}Q=X_{2}^tQ$ which hold for an arbitrary matrix $X$, we can write the definition of \( \mathfrak{z}(z) \) as
		\begin{equation}
			(L_2(q^N z)^{-1})^t L_2(z)^t Q =Q\mathfrak{z}(z)
		\end{equation}
		and
		\begin{equation}
			Q L_2(z)^t (L_2(q^N z)^{-1})^t=Q\mathfrak{z}(z).
		\end{equation}
		By taking the trace over the first copy of \( \mathrm{End}\, \mathbb{C}^N \), we arrive at \eqref{3.2} and \eqref{3.3}.
		
		Then we use \eqref{3.2} and \eqref{3.3} to prove that the coefficients of $\mathfrak{z}(z)$ belong to the center of
		$\mathcal{A}_{q,p}(\widehat{\mathfrak{gl}}_N)$.	\begin{align*}
			L_1(w) \mathfrak{z}(z)
			&= L_1(w) (L_2(q^N z)^{-1})^t L_2(z)^t \\[6pt]
			&= \widehat{R}_{12}\!\left(\frac{w}{q^N z}\right)^{t_2}
			(L_2(q^N z)^{-1})^t L_1(w)
			\left(\widehat{R}_{12}^*\!\left(\frac{w}{q^N z}\right)^{t_2}\right)^{-1}
			L_2(z)^t \\[6pt]
			&= \widehat{R}_{12}\!\left(\frac{w}{q^N z}\right)^{t_2}
			(L_2(q^N z)^{-1})^t L_1(w)
			\left(\widehat{R}_{12}^*\!\left(\frac{w}{ z}\right)^{-1}\right)^{t_2}
			L_2(z)^t \\[6pt]
			&= \widehat{R}_{12}\!\left(\frac{w}{q^N z}\right)^{t_2}
			(L_2(q^N z)^{-1})^t L_2(z)^t
			\left(\widehat{R}_{12}\!\left(\frac{w}{z}\right)^{-1}\right)^{t_2}L_1(w) \\[6pt]
			&= \widehat{R}_{12}\!\left(\frac{w}{q^N z}\right)^{t_2}
			\left(\widehat{R}_{12}\!\left(\frac{w}{z}\right)^{-1}\right)^{t_2}
			\mathfrak{z}(z) L_1(w) \\[6pt]
			&= \mathfrak{z}(z) L_1(w).
		\end{align*}
		In the second and fourth equalities we used the $RLL$ relation \eqref{2.3}and \eqref{3.4}, in the third and sixth equalities we used the property of the $R$-matrix \eqref{2.8}, in the fifth we used \eqref{3.3}.
		Thus, $\mathfrak{z}(z)$ commutes with $L_1(w)$, showing that the coefficients of $\mathfrak{z}(z)$ are central elements in
		$\mathcal{A}_{q,p}(\widehat{\mathfrak{gl}}_N)$.
	\end{proof}
	
	\begin{corollary}
		We have the formulas
		\begin{equation}\label{3.7}
			\mathfrak{z}(z)=\frac{1}{N} tr(L(z)L(q^N z)^{-1})=\frac{1}{N} tr(L(q^N z)^{-1}L(z)).
		\end{equation}
	\end{corollary}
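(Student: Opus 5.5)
The plan is to take the trace of both matrix identities \eqref{3.2} and \eqref{3.3} of Proposition \ref{prop:cent} and then convert the trace of a product of transposed matrices into the trace of an untransposed product. The coefficients of the matrices lie in a noncommutative algebra, so we cannot write $X^tY^t=(YX)^t$. We therefore work entrywise.

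First, write $A=L(z)$ and $B=L(q^N z)^{-1}$, with entries $A_{ij}$ and $B_{ij}$ in $\mathcal{A}_{q,p}(\widehat{\mathfrak{gl}}_N)$ (formal series in $z$). Since the right-hand side of \eqref{3.2} is $\mathfrak{z}(z)\mathbf 1$, its trace is $N\mathfrak{z}(z)$. The left-hand side of \eqref{3.2} has $(i,k)$ entry $\sum_j (A^t)_{ij}(B^t)_{jk}=\sum_j A_{ji}B_{kj}$. Setting $k=i$ and summing over $i$ gives
\[
N\mathfrak{z}(z)=\sum_{i,j}A_{ji}B_{ij}=\sum_j (AB)_{jj}=tr\bigl(L(z)L(q^N z)^{-1}\bigr).
\]
This step is valid because the order of the factors $A_{ji}B_{ij}$ is preserved, so no commutativity is used.

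Next, apply the same computation to \eqref{3.3}. The left-hand side has diagonal entries $\sum_j (B^t)_{ij}(A^t)_{ji}=\sum_j B_{ji}A_{ij}$. Summing over $i$ yields $\sum_j (BA)_{jj}=tr\bigl(L(q^N z)^{-1}L(z)\bigr)$, and this equals $N\mathfrak{z}(z)$. Dividing both identities by $N$ gives \eqref{3.7}.

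The only delicate point is the noncommutativity of the entries: the identity $tr(X^tY^t)=tr(YX)$ fails in general, whereas $tr(X^tY^t)=tr(XY)$ (in the index-matched sense above) holds with the factor order intact. The entrywise check above is exactly what confirms that each transposed identity produces the claimed untransposed trace. Beyond this, the argument is routine and uses nothing further from Proposition \ref{prop:cent}.
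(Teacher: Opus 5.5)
Your proposal is correct and is the same argument as the paper's, which simply takes the trace of both sides of \eqref{3.2} and \eqref{3.3}. Your entrywise check that $\operatorname{tr}(X^tY^t)=\sum_{i,j}X_{ji}Y_{ij}=\operatorname{tr}(XY)$ keeps the factor order, and so is valid for noncommuting entries; it spells out the step the paper leaves implicit.
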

	
	\begin{proof}
		The formulas follow by taking trace on both sides of the respective matrix relations \eqref{3.2} and \eqref{3.3}.
	\end{proof}

The quantum determinant $\operatorname{qdet} L(z)$ of the elliptic quantum algebra \( \mathcal{A}_{q,p}(\widehat{\mathfrak{gl}}_N) \) is defined as
	\begin{equation}\label{qdet}
		L_1(z)\cdots L_N(zq^{1-N})A_N=\operatorname{qdet} L(z)A_N
	\end{equation}
	where $A_N$ is the antisymmetrizer of $(\mathbb{C}^N)^{\otimes N}$ \cite{FIR 2018}.
	
	Expanding \eqref{qdet}, we have that
	\begin{equation}\label{3.8}
		\operatorname{qdet} L(z)
		= \sum_{\sigma\in S_N} \sgn(\sigma)\,
		L_{1,\sigma(1)}(z)\,
		L_{2,\sigma(2)}(zq^{-1})\cdots
		L_{N,\sigma(N)}(zq^{1-N}).
	\end{equation}
	It is proved that the coefficients of $\operatorname{qdet} L(z)$ belong to the center of $\mathcal{A}_{q,p}(\widehat{\mathfrak{gl}}_N)$.
	In order to uncover the connection between the quantum determinant and the series $\mathfrak{z}(z)$ constructed in Proposition \ref{prop:cent}, we need the following lemma.
	\begin{lemma}
		The quantum comatrix $\widehat{L}(z)$ is defined by
		\begin{equation}\label{3.9}
			L(z)\,\widehat{L}(zq^{-1})=\operatorname{qdet} L(z)\,\mathbf 1.
		\end{equation}
		Then the (i,j) entry of the matrix $\widehat{L}(z)$ satisfies
		\begin{equation}\label{3.10}
			\widehat{L}_{ij}(z)
			= (-1)^{\,j-i}\,L_{1\cdots \widehat{i}\cdots N}^{1\cdots \widehat{j}\cdots N}(z),
		\end{equation}
		where
		\begin{equation}
			L_{b_1,\dots,b_m}^{a_1,\dots,a_m}(z)
			= \sum_{p\in S_m} \sgn(p)\,
			L_{a_1,b_{p(1)}}(z)\cdots
			L_{a_m,b_{p(m)}}(zq^{1-m}),
		\end{equation}
		and the hats $\hat{}$ indicate omitted indices.
	\end{lemma}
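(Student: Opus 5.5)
We need to show that the matrix $\widehat L(z)$ whose $(i,j)$ entry is $(-1)^{j-i}L^{1\cdots\widehat j\cdots N}_{1\cdots\widehat i\cdots N}(z)$ satisfies \eqref{3.9}. Equivalently, for all $a,j$,
\[
\sum_{k=1}^N L_{ak}(z)\,(-1)^{j-k}L^{1\cdots\widehat j\cdots N}_{1\cdots\widehat k\cdots N}(zq^{-1}) = \delta_{aj}\operatorname{qdet}L(z).
\]
Here the index conventions of \eqref{3.10} should be read so that the row indices of the minor omit $j$ and the column indices omit $k$; fixing this reading is the first bookkeeping step.

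The approach is the standard fusion argument used for Yangians in \cite{Molev 2007}. The first step is to establish that quantum minors can be computed by antisymmetrizing on either side:
\[
L_1(z)L_2(zq^{-1})\cdots L_m(zq^{1-m})A_m = A_m L_1(z)\cdots L_m(zq^{1-m}),
\]
and that the matrix elements of this operator are the $L^{a_1\dots a_m}_{b_1\dots b_m}(z)$, skew-symmetric in the upper indices and in the lower indices. This is derived from the $RLL$ relation \eqref{2.3} by evaluating $\widehat R_{12}(q)$ and $\widehat R^*_{12}(q)$. At $z=q$ these should degenerate, up to a scalar factor, to a rank-$\binom N2$ operator proportional to the antisymmetrizer $A_2$. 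The key point is that this degeneration is independent of $p$, so that $\widehat R$ and $\widehat R^*$ give the same projector. One then builds $A_m$ as an ordered product of the $\widehat R_{ij}(q^{j-i})$ by fusion, exactly as in \cite{FIR 2018}, where $\operatorname{qdet}$ in \eqref{qdet} was shown to be well defined.

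Assuming this, we expand the $N$-fold minor in its first row. By definition \eqref{3.8}, with rows in natural order and with $\sgn$ of the permutation $\sigma$,
\[
\operatorname{qdet}L(z) = \sum_{k}L_{1k}(z)\sum_{\sigma(1)=k}\sgn(\sigma)\,L_{2\sigma(2)}(zq^{-1})\cdots.
\]
The inner sum equals $(-1)^{k-1}L^{2\cdots N}_{1\cdots\widehat k\cdots N}(zq^{-1})$. This gives the case $a=j=1$.

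For general $a$ and $j$, we use the row skew-symmetry of the quantum minor $L^{a_1\cdots a_N}_{1\cdots N}(z)$. It equals $\sgn\cdot\operatorname{qdet}$ when the $a_i$ are distinct, and it vanishes when two upper indices coincide. Expanding along the first row with $a_1=a$ and $(a_2,\dots,a_N)=(1,\dots,\widehat j,\dots,N)$ gives $\delta_{aj}$ times the sign $(-1)^{j-1}$, which is exactly absorbed by the factor $(-1)^{j-k}$ versus $(-1)^{k-1}$. The vanishing when two upper indices coincide is the essential input from step one, coming from $A_NL_1\cdots L_N=L_1\cdots L_NA_N$.

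The main obstacle is step one in the elliptic setting. We must verify that $\widehat R(q)$ and $\widehat R^*(q)$ are both proportional to $A_2$, despite the theta-function normalizations and the differing $p$, $p^*$. If a direct check of the $S^b_{a,c}(q)$ entries is messy, I would instead cite the fusion results of \cite{FIR 2018}, which underlie the centrality of $\operatorname{qdet}$.
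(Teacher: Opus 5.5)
Your core argument is correct, but it runs in the opposite direction from the paper's.

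The paper starts from the comatrix defined by \eqref{3.9}. It substitutes $\operatorname{qdet}L(z)\,A_N=L_1(z)\widehat L_1(zq^{-1})A_N$ into \eqref{qdet} and cancels $L_1(z)$ to get $L_2(zq^{-1})\cdots L_N(zq^{1-N})A_N=\widehat L_1(zq^{-1})A_N$. It then applies both sides to $e_1\otimes\cdots\otimes e_N$ and reads off $\widehat L_{ij}(zq^{-1})$ as the coefficient of $e_i\otimes e_1\otimes\cdots\otimes\widehat{e_j}\otimes\cdots\otimes e_N$.

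You instead verify \eqref{3.9} for the explicit matrix. You do this by a quantum Laplace expansion of $L^{a,1\cdots\widehat j\cdots N}_{1\cdots N}(z)$ along its first row, using that this minor equals $\delta_{aj}(-1)^{j-1}\operatorname{qdet}L(z)$; your signs check out. Your version exhibits a right ``adjugate'' without inverting anything and makes the classical repeated-row mechanism explicit. The paper's version is shorter and never has to isolate the vanishing of minors with repeated rows.

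Both routes rest on the same input \eqref{qdet}, and both need $L(z)$ invertible. The paper needs it to cancel $L_1(z)$. You need it to pass from ``the explicit matrix satisfies \eqref{3.9}'' to ``it is the comatrix''; your word ``Equivalently'' hides this step.

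Your step one, however, should be replaced rather than carried out, because as written it is wrong in two places.

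First, the identity $L_1(z)\cdots L_m(zq^{1-m})A_m=A_mL_1(z)\cdots L_m(zq^{1-m})$ is stronger than what fusion gives; its Yangian analogue already fails. What is true, and all you actually use, is $L_1(z)\cdots L_N(zq^{1-N})A_N=A_NL_1(z)\cdots L_N(zq^{1-N})A_N$, which is just \eqref{qdet}. Apply \eqref{qdet} to $e_1\otimes\cdots\otimes e_N$ and compare coefficients of $e_{a_1}\otimes\cdots\otimes e_{a_N}$. This gives directly $L^{a_1\cdots a_N}_{1\cdots N}(z)=\sgn(a)\operatorname{qdet}L(z)$ for distinct $a_i$, and $0$ otherwise.

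Second, your planned check that $\widehat R(q)$ and $\widehat R^*(q)$ are the same multiple of $A_2$ would fail.
\begin{itemize}
\item From the displayed formula for $S^b_{a,c}$, the entries with $a=c$ do not vanish at $z=q$; they vanish at $z=q^{-1}$.
\item Even at $z=q^{-1}$, for $N\ge 3$ the matrix is not a multiple of $A_2$. It is antisymmetric in the row indices, so its image lies in $\Lambda^2\mathbb{C}^N$, but the matrix itself depends on $p$.
\item For example, when $N=3$ it sends $e_3\otimes e_3$ to a nonzero multiple of $e_1\otimes e_2-e_2\otimes e_1$. The coefficient tends to $0$ as $p\to 0$, while the six-vertex-type entries stay finite.
\end{itemize}
What fusion needs is a statement about images, namely that both degenerate matrices have image $\Lambda^2\mathbb{C}^N$. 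It does not need a common projector for $p$ and $p^*$.

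Since the paper takes \eqref{qdet} as its definition of $\operatorname{qdet}L(z)$, imported from Frappat--Issing--Ragoucy, you can simply start from it. Your remaining argument then goes through as written.
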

	
	\begin{proof}
		By the formula of the quantum determinant \eqref{3.8} and the definition of quantum comatrix \eqref{3.9},
		it follows that
		\begin{equation}
			L_1(z)\,L_2(zq^{-1})\cdots L_N(zq^{1-N})\,A_N
			= L_1(z)\,\widehat{L}_1(zq^{-1})\,A_N.
		\end{equation}
		Consequently,
		\begin{equation}\label{3.13}
			L_2(zq^{-1})\cdots L_N(zq^{1-N})\,A_N
			= \widehat{L}_1(zq^{-1})\,A_N.
		\end{equation}
		By applying both sides of \eqref{3.13} to the vector $e_1\otimes e_2\otimes\cdots\otimes e_N$ and comparing the coefficients of $e_i\otimes e_1\otimes\cdots\otimes \widehat{e_j}
		\otimes\cdots\otimes e_N$,
		we obtain \eqref{3.10}.
	\end{proof}
	
	The center elements $\mathfrak{z}(z)$ are related to the quantum determinant by the following Liouville-type
	formula.
	\begin{theorem}
		We have the relations
		\begin{equation}
			\mathfrak{z}(z)= \frac{\operatorname{qdet} L(q^{N-1}z)}{\operatorname{qdet} L(q^{N}z)}.
		\end{equation}
	\end{theorem}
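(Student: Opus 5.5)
We follow the Yangian argument for the quantum Liouville formula: express $L(q^Nz)^{-1}$ through the quantum comatrix of the preceding Lemma, and then compute one diagonal entry of the identity \eqref{3.2} (or \eqref{3.3}) by a row expansion of the quantum determinant.

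\emph{Step 1: invert $L$.} From \eqref{3.9} with $z$ replaced by $q^N z$ we have $L(q^Nz)\widehat L(q^{N-1}z)=\operatorname{qdet}L(q^Nz)\,\mathbf 1$. Since $\operatorname{qdet}L(q^Nz)$ is central (result of \cite{FIR 2018}), we get
\[
L(q^Nz)^{-1}=\widehat L(q^{N-1}z)\,\operatorname{qdet}L(q^Nz)^{-1}.
\]
Here we assume, as the statement implicitly does, that $L$ and $\operatorname{qdet}L$ are invertible as series.

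\emph{Step 2: substitute into \eqref{3.3}.} Because the determinant factor is central, \eqref{3.3} gives the entrywise identity
\[
\sum_k \widehat L_{ki}(q^{N-1}z)\,L_{jk}(z)=\delta_{ij}\,\mathfrak z(z)\,\operatorname{qdet}L(q^Nz).
\]
(Alternatively we use \eqref{3.2}, $\sum_k L_{ki}(z)\widehat L_{jk}(q^{N-1}z)$, whichever ordering matches a row/column expansion.) Fix $i=j=N$. By \eqref{3.10}, $\widehat L_{kN}(q^{N-1}z)=(-1)^{N-k}L^{1\cdots\widehat k\cdots N}_{1\cdots N-1}(q^{N-1}z)$, a quantum minor with spectral parameters $q^{N-1}z,\dots,qz$. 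Pairing it with $L_{Nk}(z)$, whose parameter $z=q^{N-1}z\cdot q^{1-N}$ continues that sequence, the sum should be exactly the expansion of $\operatorname{qdet}L(q^{N-1}z)$ along its last row/column, i.e. the analogue of \eqref{3.8} with the final factor at $q^{N-1}z\cdot q^{1-N}$. This gives $\operatorname{qdet}L(q^{N-1}z)=\mathfrak z(z)\operatorname{qdet}L(q^Nz)$, and dividing by the central factor proves the Theorem.

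\emph{Main obstacle.} The difficulty is Step 2: matching the sum with a genuine expansion of $\operatorname{qdet}$ requires the quantum minors to admit both row and column expansions, in either order of factors. This means showing that $L_1(z)\cdots L_N(zq^{1-N})A_N=A_N L_1(z)\cdots L_N(zq^{1-N})$ and $=A_N L_N(zq^{1-N})\cdots L_1(z)A_N$. Here the $RLL$ relation \eqref{2.3} carries $\widehat R$ on the left and $\widehat R^*$ on the right. So I would first check that $\widehat R(q^{-1})$ and $\widehat R^*(q^{-1})$ both degenerate to a scalar multiple of the antisymmetrizer $A_2$; since fusion at $z=q^{-1}$ is independent of $p$, this should hold for $p$ and for $p^*$ alike. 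With that, the standard fusion procedure gives $A_N$ on both sides, the column-permuted expansion in reversed order, and the needed Laplace-type expansion of $\operatorname{qdet}L(q^{N-1}z)$ along its last factor. If the ordering turns out not to match \eqref{3.3}, I would use \eqref{3.2} instead and expand along the first factor, using the equivalent form of the quantum minor.
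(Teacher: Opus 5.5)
Your overall plan (invert $L(q^Nz)$ through the comatrix, then recognise a quantum determinant) is the paper's argument applied to the single $(N,N)$ entry of \eqref{3.3} rather than to the trace. Step 1 and your entrywise form of \eqref{3.3} are correct.

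\textbf{The gap is in Step 2.} The formula you copy from \eqref{3.10} has the row and column sets interchanged. With it, $\widehat L_{kN}(q^{N-1}z)$ is a minor in rows $\{1,\dots,N\}\setminus\{k\}$ and columns $1,\dots,N-1$. Multiplying by $L_{Nk}(z)$ then uses row $N$ twice and column $k$ twice, so the sum is not an expansion of anything. For $N=2$ it gives $L_{11}(qz)L_{22}(z)-L_{21}(qz)L_{21}(z)$, whereas
\[
\operatorname{qdet}L(qz)=L_{11}(qz)L_{22}(z)-L_{12}(qz)L_{21}(z).
\]
Your ``main obstacle'' paragraph therefore misdiagnoses the problem. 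Two-sided antisymmetrizer identities, reversed-order or column expansions, and a fusion check for $\widehat R^*(q^{-1})$ would not help, because the defect is in the index pattern, not in the ordering of factors.

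\textbf{The fix.} Redo the coefficient comparison in the Lemma's own proof, taking the coefficient of $e_i\otimes e_1\otimes\cdots\otimes\widehat{e_j}\otimes\cdots\otimes e_N$. It actually gives
\[
\widehat L_{ij}(w)=(-1)^{i-j}L^{1\cdots\widehat{j}\cdots N}_{1\cdots\widehat{i}\cdots N}(w),
\]
the minor omitting row $j$ and column $i$, as for a classical adjugate. For $N=2$ this gives $\widehat L_{12}=-L_{12}$, as it should. In particular
\[
\widehat L_{kN}(q^{N-1}z)=(-1)^{N-k}L^{1\cdots N-1}_{1\cdots\widehat{k}\cdots N}(q^{N-1}z).
\]
Your sum then becomes
\[
\sum_{k=1}^N \widehat L_{kN}(q^{N-1}z)\,L_{Nk}(z)=\sum_{\sigma\in S_N}\sgn(\sigma)\,L_{1\sigma(1)}(q^{N-1}z)\,L_{2\sigma(2)}(q^{N-2}z)\cdots L_{N\sigma(N)}(z)=\operatorname{qdet}L(q^{N-1}z).
\]
This is literally \eqref{3.8}, regrouped by $k=\sigma(N)$. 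Hence $\operatorname{qdet}L(q^{N-1}z)=\mathfrak z(z)\operatorname{qdet}L(q^Nz)$, and the theorem follows by centrality. Nothing beyond the existence and centrality of $\operatorname{qdet}$, already assumed in the paper, is needed.

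Once repaired, your single-entry version is slightly leaner than the paper's proof. The paper takes the full trace, so it must also identify each group $\sum_i\widehat L_{ik}(q^{-1}z)L_{ki}(q^{-N}z)$ with $k\neq N$ with $\operatorname{qdet}L(q^{-1}z)$. Those groups are quantum minors with rows in the order $1,\dots,\widehat{k},\dots,N,k$, which is what its ``simple induction'' has to cover. Relation \eqref{3.3} already tells you the $(N,N)$ entry alone equals $\mathfrak z(z)$.
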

	
	\begin{proof}
		From \eqref{3.7}, we have
		\[
		\mathfrak{z}(q^{-N}z)=\frac{1}{N}tr(L(z)^{-1}L(q^{-N}z)).
		\]
		
		By the definition of $\widehat{L}(z)$ \eqref{3.9},
		\[
		L(z)^{-1} = \bigl(\operatorname{qdet} L(z)\bigr)^{-1}\,\widehat{L}(q^{-1}z).
		\]
		Thus
		\[
		\mathfrak{z}(q^{-N}z)
		= \bigl(\operatorname{qdet} L(z)\bigr)^{-1}\,
		\frac{1}{N} tr\bigl(\widehat{L}(q^{-1}z)L(q^{-N}z)\bigr),
		\]
		where we have used the coefficients of the quantum determinant belongs to the center of the elliptic quantum algebra \( \mathcal{A}_{q,p}(\widehat{\mathfrak{gl}}_N) \).
		
		Then by the formula of the quantum comatrix \eqref{3.10} and simple induction,
		we have
		\[
		tr(\widehat{L}(q^{-1}z)L(q^{-N}z))=N\operatorname{qdet} L(q^{-1}z)\mathbf,
		\]
		thus,
		\[
		\mathfrak{z}(q^{-N}z)
		= \frac{\operatorname{qdet} L(q^{-1}z)}{\operatorname{qdet}L(z)}.
		\]
		By substituting $z$ by $q^Nz$, we finally get
		\[
		\mathfrak{z}(z)
		= \frac{\operatorname{qdet} L(q^{N-1}z)}{\operatorname{qdet} L(q^{N}z)}.
		\]
	\end{proof}
	
	\section{Minor identities for quantum determinants}\label{sec:minor}
	In this section, we extend several identities for quantum minors from Yangians and quantum affine algebras to elliptic quantum algebra $\mathcal{A}_{q,p}(\widehat{\mathfrak{gl}}_N)$\cite{MNO 1996,JZ 2024,JZ 2025, JLZ 2025}.
	
	For an invertible matrix \( A \) with possible non-commutative entries, the quasideterminants \( |A|_{i,j} \) is defined to be \( ((A^{-1})_{j,i})^{-1} \) \cite{KL 1995}. It also equals
	\[
	\begin{vmatrix}
		a_{11} & \dots & a_{1j} & \dots & a_{1N} \\
		\vdots &       & \vdots &       & \vdots \\
		a_{i1} & \dots & \boxed{a_{ij}} & \dots & a_{iN} \\
		\vdots &       & \vdots &       & \vdots \\
		a_{N1} & \dots & a_{Nj} & \dots & a_{NN}
	\end{vmatrix}
	= a_{ij} - r_i^j \left( A^{1,2,\dots,\hat{i},\dots,N}_{1,2,\dots,\hat{j},\dots,N} \right)^{-1} c_j^i,
	\]
	where \( r_i^j \) (\( c_j^i \), respectively) is the row (column, respectively) matrix obtained from the \( i \)-th row (\( j \)-th column, respectively) of \( A \) by deleting the element \( a_{ij} \), and the symbol \( \hat{\ } \) means the index should be omitted.
	
	From the definition of comatrix \eqref{3.9} ,we have the factorization of $\mathrm{qdet}\,L(z)$ .
	\begin{theorem}
		The quantum determinant $\mathrm{qdet}\,L(z)$ admits the factorization
		\begin{align*}
			\mathrm{qdet}\,L(z)
			&=L_{11}(zq^{1-N})\,\bigl|L^{(2)}(zq^{2-N})\bigr|_{22}\cdots \bigl|L^{(N)}(z)\bigr|_{NN} \\
			&= \bigl|L_{(1)}(z)\bigr|_{11}\cdots \bigl|L_{(N-1)}(zq^{2-N})\bigr|_{N-1,N-1}\,L_{NN}(zq^{1-N}),
		\end{align*}
		where for $m=1,\dots,N$, denote by $L^{(m)}(z)$ the submatrix of $L(z)$
		corresponding to the first $m$ rows and columns; the subscript $(m)$ of a matrix
		indicates its submatrix obtained by removing the first $m-1$ rows and columns and $|L(z)|_{ij}$ is the $(i,j)$-th quasideterminant of $L(z)$.
		Moreover, the $N$ factors on the right-hand side of the equality pairwise commute.
	\end{theorem}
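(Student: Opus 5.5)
We follow the standard argument for Yangians and quantum affine algebras (cf.\ \cite{MNO 1996, Molev 2007}), based on the comatrix relation \eqref{3.9} and the explicit description \eqref{3.10} of the entries of $\widehat{L}(z)$.

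\emph{Step 1: one quasideterminant as a ratio of minors.} Since $\operatorname{qdet} L(z)$ is central, \eqref{3.9} gives $L(z)^{-1}=\widehat{L}(zq^{-1})\bigl(\operatorname{qdet} L(z)\bigr)^{-1}$. Hence
\[
|L(z)|_{NN}=\bigl((L(z)^{-1})_{NN}\bigr)^{-1}=\operatorname{qdet} L(z)\,\widehat{L}_{NN}(zq^{-1})^{-1}.
\]
By \eqref{3.10}, $\widehat{L}_{NN}(zq^{-1})$ equals the quantum minor $L^{1\cdots N-1}_{1\cdots N-1}(zq^{-1})$. This minor is the quantum determinant of the submatrix $L^{(N-1)}$, evaluated at the spectral parameter $zq^{-1}$. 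We obtain
\[
\operatorname{qdet} L(z)=\operatorname{qdet} L^{(N-1)}(zq^{-1})\,\bigl|L(z)\bigr|_{NN},
\]
with both factors commuting, since $\operatorname{qdet} L(z)$ is central.

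\emph{Step 2: induction.} Next we apply Step 1 to $L^{(m)}$ for $m=N-1,\dots,2$, with the parameter shifted each time. For this we need that $L^{(m)}(z)$ satisfies the same $RLL$-type relations, restricted to indices $1,\dots,m$, so that the comatrix identity and centrality of its quantum determinant inside the subalgebra hold. This is the step I expect to be the main obstacle. The elliptic $R$-matrix entries $R_{a,c}^{b,d}$ only obey $a+c\equiv b+d \pmod N$, so the $R$-matrix does not preserve the subspace spanned by $e_1,\dots,e_m$ in the naive way that the Yangian $R$-matrix does.

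To get around this, I would avoid defining a separate algebra for the submatrix. Instead I would use the antisymmetrizer form \eqref{qdet} directly: apply the relation $L_1(z)\cdots L_N(zq^{1-N})A_N$ to vectors $e_{a_1}\otimes\cdots\otimes e_{a_m}\otimes\cdots$. This shows that the quantum minors $L^{a}_{b}$ are antisymmetric in both sets of indices, and that the row expansion $\sum_k L_{ik}(z)\,(\pm) L^{\hat{\ }}(zq^{-1})$ of a minor holds. This expansion mirrors the proof of \eqref{3.10}. From it we get the comatrix relation for the $m\times m$ block, namely that the matrix of complementary minors inverts $L^{(m)}$ up to the factor $\operatorname{qdet}L^{(m)}$. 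Commutativity of $\operatorname{qdet}L^{(m)}$ with the entries of $L^{(m)}$ is then needed only to move the factor across. If that commutativity fails, I would keep the factor ordered on the correct side, as in the first displayed formula of Step 1.

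Iterating Step 1 gives
\[
\operatorname{qdet} L(z)=L_{11}(zq^{1-N})\,|L^{(2)}(zq^{2-N})|_{22}\cdots|L^{(N)}(z)|_{NN},
\]
which is the first factorization. The second factorization follows symmetrically, using $|L(z)|_{11}$ and the minor $\widehat{L}_{11}$ on the lower-right block $L_{(2)}$. Alternatively, one can write $\operatorname{qdet}$ through the column expansion of $A_N$ from the other side, $A_N L_N\cdots L_1$, which reverses the order of the spectral shifts.

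\emph{Step 3: pairwise commutativity.} Each partial product $\operatorname{qdet} L^{(m)}(zq^{m-N})$ commutes with every entry of $L^{(m)}$, and hence with all quasideterminants of smaller blocks. These are built from the entries of $L^{(m)}$ and their inverses. Each factor $|L^{(m)}|_{mm}$ equals the ratio $\operatorname{qdet}L^{(m-1)}(\cdot)^{-1}\operatorname{qdet}L^{(m)}(\cdot)$ of two such partial products. So the factors are ratios of mutually commuting elements, and therefore they pairwise commute.
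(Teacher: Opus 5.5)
Your Step~1 is the paper's argument: the $(N,N)$ entry of \eqref{4.1} gives \eqref{4.3}. Your Step~2 is more careful than the paper, which just says ``by induction on \eqref{4.3}''. The block comatrix identity you want is available, but not by merely applying \eqref{qdet} to vectors. You must first cancel $L_1(z),\dots,L_{N-m}(zq^{m+1-N})$ with their inverses, using centrality of $\qdet L(z)$, exactly as in the proof of the Jacobi ratio theorem. The right-hand side then acts on the last $m$ tensor factors only through $A_N$. This makes every size-$m$ minor $L^{a_1\cdots a_m}_{b_1\cdots b_m}(w)$ antisymmetric in its row indices, in particular zero when two of them coincide. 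Put $\widehat{L^{(m)}}_{kj}(w)=(-1)^{k+j}L^{1\cdots\widehat{j}\cdots m}_{1\cdots\widehat{k}\cdots m}(w)$. Expanding $L^{i,1,\dots,\widehat{j},\dots,m}_{1,\dots,m}(w)$ along its first row then gives $L^{(m)}(w)\,\widehat{L^{(m)}}(wq^{-1})=L^{1\cdots m}_{1\cdots m}(w)\,\mathbf 1$. Hence, with no commutativity assumption,
\[
L^{1\cdots m}_{1\cdots m}(w)=\bigl|L^{(m)}(w)\bigr|_{mm}\,L^{1\cdots m-1}_{1\cdots m-1}(wq^{-1}),
\]
with the quasideterminant on the \emph{left}. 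Iterating yields $\qdet L(z)=|L^{(N)}(z)|_{NN}\,|L^{(N-1)}(zq^{-1})|_{N-1,N-1}\cdots L_{11}(zq^{1-N})$, which is the reverse of the first stated factorization. The same argument on the blocks $L_{(m)}$ yields the second factorization exactly in the stated order.

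The genuine gap is Step~3, together with your iteration of Step~1 that keeps moving $\qdet L^{(m-1)}$ to the left. Both rest on the claim that $L^{1\cdots m}_{1\cdots m}$ commutes with the entries of $L^{(m)}$ for $m<N$. You flagged exactly this as doubtful in Step~2 and never prove it. In $\mathcal{A}_{q,p}(\widehat{\gl}_N)$ the entries of $L^{(m)}$ do not satisfy a closed $RLL$ relation, since $R^{b,d}_{a,c}\neq0$ for all $a+c\equiv b+d \pmod N$. Already the $((1,1),(1,1))$ entry of \eqref{2.3} ties $L_{11}(z)L_{11}(w)$ and $L_{11}(w)L_{11}(z)$ to products $L_{b1}(z)L_{d1}(w)$ and $L_{1d}(w)L_{1b}(z)$ with $b+d\equiv 2$, $b\neq 1$. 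So there is no copy of $\mathcal{A}_{q,p}(\widehat{\gl}_m)$ in which this minor would be central.

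The paper does not close this either. It deduces commutativity from centrality of $\qdet L(z)$, which only shows that the two factors in \eqref{4.3} commute. Set $D_m=L^{1\cdots m}_{1\cdots m}(zq^{m-N})$ with $D_0=1$; the identity above gives $|L^{(m)}(zq^{m-N})|_{mm}=D_mD_{m-1}^{-1}$. Pairwise commutativity of the factors, which would also give the stated order of the first factorization, is therefore equivalent to pairwise commutativity of $D_1,\dots,D_N$. For $N=3$ this reads $[L_{11}(zq^{-2}),L^{12}_{12}(zq^{-1})]=0$, i.e.\ by the Jacobi ratio theorem $[L_{11}(zq^{-2}),(L(z)^{-1})_{33}]=0$. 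That identity would have to be extracted from $L_2(z)^{-1}\widehat R_{12}(w/z)L_1(w)=L_1(w)\widehat R^*_{12}(w/z)L_2(z)^{-1}$ at $w/z=q^{-2}$, where off-diagonal entries of $\widehat R$ and $\widehat R^*$ contribute extra terms. Without such an argument Step~3 remains unsupported.
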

	\begin{proof}
		By the definition of comatrix \eqref{3.9} we have
		\begin{equation}\label{4.1}
			\widehat{L}(zq^{-1})=\mathrm{qdet}\,L(z)\,L^{-1}(z).
		\end{equation}
		Taking the $(N,N)$-th entry of \eqref{4.1} we come to
		\begin{equation}\label{4.2}
			\mathrm{qdet}\,L(z)\,(L^{-1}(z))_{NN}=\widehat{L}_{NN}(zq^{-1}).
		\end{equation}
		Then by the definition of quasideterminants, we  get from \eqref{4.2} that
		\begin{equation}\label{4.3}
			\mathrm{qdet}\,L(z)= \mathrm{qdet}\,L^{(N-1)}(zq^{-1}) \, \bigl|L^{(N)}(z)\bigr|_{NN}.
		\end{equation}
		Note that the factors in the decomposition are mutually commutative since the coefficients of quantum determinant belong to the center.
		Then by induction on \eqref{4.3} we obtain the first equality.
		
		The second equality, another decomposition of $\mathrm{qdet}\,L(z)$, can be
		obtained by starting the induction argument from the $(1,1)$-th entry of \eqref{4.1}.
	\end{proof}
	
	Define a new associative algebra over $\mathbb{C}$. Denote this algebra by $\mathcal{B}_N$.
	It is generated by $t_{ij}(n)$, where $1\leq i,j\leq N$ and $n\in\mathbb{Z}$. Let $T(z)=(t_{ij}(z))$ be the generator matrix.
	$T(z)=\sum_{i,j} t_{ij}(z)\otimes E_{ij}$, where $t_{ij}(z)$ are formal series in $z^{-1}$,
	$t_{ij}(z)=\sum_{n\in\mathbb{Z}} t_{ij}(n) z^{-n}$. The defining relation is
	\begin{equation}
		\bigl(\widehat{R}_{12}^*(z/w)\bigr)^{-1} T_1(z) T_2(w) =T_2(w) T_1(z) \bigl(\widehat{R}_{12}(z/w)\bigr)^{-1}.
	\end{equation}
	
	By the same method for getting the quantum determinant of the elliptic algebra $\mathcal{A}_{q,p}(\widehat{\mathfrak{gl}}_N)$, we obtain
	\begin{equation}
		\operatorname{qdet}T(z)=\sum_{\sigma\in S_N} \mathrm{sgn}(\sigma) T_{N,\sigma (N)}(zq^{1-N}) T_{N-1,\sigma(N-1)}(zq^{2-N})\cdots T_{1,\sigma(1)}(z).
	\end{equation}
	
	We also define the quantum minors
	\begin{equation}
		T_{b_1,\dots,b_m}^{a_1,\dots,a_m}(z)=\sum_{\sigma\in S_m} \mathrm{sgn}(\sigma) T_{a_1,b_{\sigma (1)}}(zq^{1-m})\cdots T_{a_m,b_{\sigma (m)}}(z).
	\end{equation}
	
	Clearly we have an algebra isomorphism, which is defined by
	\begin{equation}
		\omega_N: \mathcal{A}_{q,p}(\widehat{\mathfrak{gl}}_N) \to \mathcal{B}_N,\quad L(z) \mapsto T^{-1}(z),
	\end{equation}
	and there is an inverse map of $\omega_N$,
	\begin{equation}
		(\omega_N)^{-1}: \mathcal{B}_N \to \mathcal{A}_{q,p}(\widehat{\mathfrak{gl}}_N),\quad T(z) \mapsto L^{-1}(z).
	\end{equation}
	
	For any $k\geq 0$ introduce the homomorphism $\varphi_k$
	\begin{equation}
		\varphi_k:\mathcal{B}_N \to \mathcal{B}_{N+k}, 
	\end{equation}
	which takes $T_{ij}(z)$ to $T_{k+i,k+j}(z)$. Consider the composition
	\begin{equation}
		\psi_k = (\omega_{N+k})^{-1} \circ \varphi_k \circ \omega_N.
	\end{equation}
	Its action on the generators of $\mathcal{A}_{q,p}(\widehat{\mathfrak{gl}}_N)$ can be expressed in terms of quasideterminants as follows.
	\begin{lemma}\label{Lemma 4.1.}
		For any $1\leq i,j\leq N$, we have
		\[
		\psi_k: L_{ij}(z) \longmapsto
		\begin{pmatrix}
			L_{11}(z) & \cdots & L_{1k}(z) & L_{1,k+j}(z) \\
			\vdots & \vdots & \vdots & \vdots \\
			L_{k1}(z) & \cdots & L_{kk}(z) & L_{k,k+j}(z) \\
			L_{k+i,1}(z) & \cdots & L_{k+i,k}(z) & \boxed{L_{k+i,k+j}(z)}
		\end{pmatrix}.
		\]
	\end{lemma}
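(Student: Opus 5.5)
We compute \(\psi_k(L_{ij}(z))\) directly from the definitions, treating \(\psi_k\) purely at the level of the generator matrices. Write \(L(z)\) for the generator matrix of \(\mathcal{A}_{q,p}(\widehat{\mathfrak{gl}}_{N+k})\) and split it into blocks
\[
L(z)=\begin{pmatrix} A(z) & B(z)\\ C(z) & D(z)\end{pmatrix},
\]
where \(A(z)\) is the \(k\times k\) block of the first \(k\) rows and columns and \(D(z)\) is the \(N\times N\) block indexed by \(k+1,\dots,k+N\).

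\textbf{Tracing the generators.} By definition, \(\omega_N\) sends \(L_{ij}(z)\) to \((T(z)^{-1})_{ij}\). Then \(\varphi_k\) replaces \(T(z)\) by its lower-right \(N\times N\) block \(T_{(k+1)}(z)=(T_{k+a,k+b}(z))_{a,b=1}^N\) in \(\mathcal{B}_{N+k}\). Since \(\varphi_k\) is an algebra homomorphism, it maps inverses to inverses: the entries of \(T(z)^{-1}\) are series in the \(t_{ab}\) determined by \(T(z)T(z)^{-1}=1\), and this identity is preserved. Hence
\[
\varphi_k\bigl((T(z)^{-1})_{ij}\bigr)=\bigl((T_{(k+1)}(z))^{-1}\bigr)_{ij}.
\]
Finally \((\omega_{N+k})^{-1}\) sends \(T(z)\) to \(L(z)^{-1}\), so \(T_{(k+1)}(z)\) goes to the lower-right block of \(L(z)^{-1}\), which we call \((L^{-1})_{DD}(z)\). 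Altogether,
\[
\psi_k(L_{ij}(z))=\Bigl(\bigl((L^{-1})_{DD}(z)\bigr)^{-1}\Bigr)_{ij}.
\]

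\textbf{Block inversion.} The standard noncommutative block-inversion formula, valid whenever \(A\) is invertible, gives
\[
(L^{-1})_{DD}=\bigl(D-CA^{-1}B\bigr)^{-1}.
\]
Invertibility of \(A\) holds because it is the generator matrix of a subalgebra, and \(L(z)\) is assumed invertible as a formal series. Therefore
\[
\psi_k(L_{ij}(z))=L_{k+i,k+j}(z)-\sum_{a,b\le k}L_{k+i,a}(z)\,(A(z)^{-1})_{ab}\,L_{b,k+j}(z).
\]
By the row–column expansion of quasideterminants recalled before this lemma, the right-hand side is exactly the boxed quasideterminant of the \((k+1)\times(k+1)\) matrix formed from rows \(1,\dots,k,k+i\) and columns \(1,\dots,k,k+j\).

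\textbf{Main obstacle.} The delicate step is justifying the inversions. We need \(L(z)\), \(A(z)\) and the Schur complement \(D-CA^{-1}B\) to be invertible in the appropriate completion, and \(\omega_N\) must be well defined. For \(\omega_N\), we check that \(T(z)\mapsto L(z)^{-1}\) respects the defining relations: inverting the RLL relation \eqref{2.3} gives exactly the \(\mathcal{B}_N\) relation with the roles of \(\widehat R^{-1}\) and \(\widehat R^{*-1}\) as written. For the invertibility questions, I would adopt the paper's implicit convention that the generator matrices and their leading principal submatrices are invertible, as already used in the factorization theorem. Once these are granted, the remaining argument is formal matrix algebra.
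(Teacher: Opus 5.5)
Your proposal is correct and is essentially the paper's proof: both pass the generator matrix through $\omega_N$, $\varphi_k$ and $(\omega_{N+k})^{-1}$ to identify the image with the inverse of the lower-right $N\times N$ block of $L(z)^{-1}$, then use block inversion to get the Schur complement $D(z)-C(z)A(z)^{-1}B(z)$, whose $(i,j)$ entry is the boxed quasideterminant. The only difference is cosmetic: the paper tracks $L(z)^{-1}\mapsto T(z)\mapsto D_1(z)$ rather than $L(z)\mapsto T(z)^{-1}$, and, like you, it takes for granted both that $\varphi_k$ is a homomorphism and the required invertibility.
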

	
	The proof of this lemma is almost the same as the case of Yangian (see Lemma 1.11.2 in \cite{Molev 2007}).
	\begin{proof}
		Introduce the sets of indices
		\[
		P = \{1,\dots,k\}, \quad Q = \{k+1,\dots,k+N\}.
		\]
		Consider the block partitioned matrix
		\[
		\begin{bmatrix}
			A_1(z) & B_1(z) \\
			C_1(z) & D_1(z)
		\end{bmatrix}
		\quad \text{and} \quad
		\begin{bmatrix}
			A_2(z) & B_2(z) \\
			C_2(z) & D_2(z)
		\end{bmatrix}.
		\]
		For any matrix $X$, we will denote by $X_{PQ}$ the submatrix whose rows and columns are enumerated by the elements of the sets $P$ and $Q$ respectively, whose entries are the series $L_{ij}(z)$ and $t_{ij}(z)$ so that
		\[
		\begin{aligned}
			A_1(z) &= T(z)_{PP}, & B_1(z) &= T(z)_{PQ}, & C_1(z) &= T(z)_{QP}, \text{and} & D_1(z) &= T(z)_{QQ}, \\
			A_2(z) &= L(z)_{PP}, & B_2(z) &= L(z)_{PQ}, & C_2(z) &= L(z)_{QP}, \text{and} & D_2(z) &= L(z)_{QQ}.
		\end{aligned}
		\]
		Let
		\[
		\begin{bmatrix}
			A_1(z) & B_1(z) \\
			C_1(z) & D_1(z)
		\end{bmatrix}^{-1}
		=
		\begin{bmatrix}
			A_1'(z) & B_1'(z) \\
			C_1'(z) & D_1'(z)
		\end{bmatrix}
		\quad \text{and} \quad
		\begin{bmatrix}
			A_2(z) & B_2(z) \\
			C_2(z) & D_2(z)
		\end{bmatrix}^{-1}
		=
		\begin{bmatrix}
			A_2'(z) & B_2'(z) \\
			C_2'(z) & D_2'(z)
		\end{bmatrix}.
		\]
		By its definition, the homomorphism $\psi_k$ maps  $(\omega_N)^{-1}(T(z))=L^{-1}(z)$ to $D_2'(z)$:
		\[
		\begin{aligned}
			\psi_k(L^{-1}(z)) &= (\omega_{N+k})^{-1} \circ \varphi_k \circ \omega_N(L^{-1}(z)) \\
			&= (\omega_{N+k})^{-1} \circ \varphi_k(T(z)) \\
			&= (\omega_{N+k})^{-1} (D_1(z)) \\
			&=D_2'(z)
		\end{aligned}
		\]
		where we know
		\[
		D_2'(z) = \bigl( D_2(z) - C_2(z) A_2^{-1}(z) B_2(z) \bigr)^{-1}.
		\]
		Hence,
		\[
		\psi_k: L(z) \longmapsto D_2(z) - C_2(z) A_2^{-1}(z) B_2(z).
		\]
		Taking the $ij$-th entry and using the definition of the quasideterminant we get the desired formula for the image of $L_{ij}(z)$.
	\end{proof}
	
	For any two sets $I=\{i_1,\ldots,i_r\}$, $J=\{j_1,\ldots,j_s\}$ with
	$i_1<\cdots<i_r$ and $j_1<\cdots<j_s$, we denote by $l(I,J)$ the inversion
	number of the sequence $i_1,\ldots,i_r,j_1,\ldots,j_s$.
	Let $A$ be any square matrix of size $N\times N$.
	For any subsets $I=\{i_1,\ldots,i_k\}$, $J=\{j_1,\ldots,j_k\}$ of $[1,N]$,
	we denote by $A^J_I$ the matrix whose $ab$-th entry is $A_{i_a j_b}$.
	For any subsets $I=\{i_1<i_2<\cdots<i_k\}$, we denote by $A_I$ the submatrix
	of $A$ with rows and columns indexed by $I$.
	The following theorem is an analog of Jacobi's ratio theorem for the quantum determinants \cite{JZ 2025,JLZ 2025,KL 1995}.
	\begin{theorem}\label{Jacobi's ratio theorem}
		Let $I=\{i_1<\cdots<i_k\}$ and $J=\{j_1<\cdots<j_k\}$ be two subsets of $[1,N]$
		of the same cardinality, and $I^c=\{i_{k+1}<\cdots<i_N\}$ and $J^c=\{j_{k+1}<\cdots<j_N\}$
		be their complements. Then
		\begin{equation} \label{4.8}
			L_{j_{k+1},\dots,j_N}^{i_{k+1},\dots,i_N}(zq^{-k}) = \operatorname{qdet}(L(z)) (\omega_N)^{-1}(T^{j_k,\dots,j_1}_{i_k,\dots,i_1}(z)) =\operatorname{qdet}(L(z))
			((L^{-1})^{j_k,\dots,j_1}_{i_k,\dots,i_1}(z)).
		\end{equation}
	\end{theorem}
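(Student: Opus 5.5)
We follow the standard route for Jacobi's ratio theorem in the Yangian setting, as in \cite{Molev 2007} and \cite{JZ 2025}. The plan is to reduce to the case $I=J=\{1,\dots,k\}$ by a permutation argument, and then use the quasideterminant factorization of $\operatorname{qdet}$ together with the homomorphism $\psi_k$ of Lemma \ref{Lemma 4.1.}.

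\textbf{Reduction to $I=J=\{1,\dots,k\}$.} For a permutation $\sigma\in S_N$, the map $L(z)\mapsto P_\sigma L(z)P_{\sigma'}^{-1}$ is only available for the elliptic $R$-matrix when it commutes with $\widehat R$. In general it does not, because of the $\mathbb Z_N$-grading. Instead, I would use the antisymmetrizer fusion $L_1(z)\cdots L_m(zq^{1-m})A_m$ to establish the following.

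\begin{itemize}
\item Quantum minors $L^{a_1\dots a_m}_{b_1\dots b_m}(z)$ are skew-symmetric in the upper indices, and likewise in the lower indices.
\item The same holds for the $T$-minors.
\end{itemize}

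This reduces \eqref{4.8} to proving the identity for arbitrary orderings. Concretely, I would prove the general statement directly by applying the fused relation
\[
L_1(z)\cdots L_N(zq^{1-N})A_N=\operatorname{qdet}L(z)A_N
\]
to suitable basis vectors. The steps are as follows.

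\begin{enumerate}
\item Multiply on the right by $T_1(z)$, then $T_2(zq^{-1})$, and so on up to $T_k(zq^{1-k})$, placed in the appropriate order. Here $T=L^{-1}$ under $\omega_N^{-1}$.
\item Use the inverse $RTT$ relation. This lets us move the product $T_k\cdots T_1$ past $A_N$, because $A_N$ is the fusion projector for $T$ with reversed spectral order.
\item Cancel $L_1(z)\cdots L_k(zq^{1-k})$ against the $T$'s.
\item Arrive at
\[
L_{k+1}(zq^{-k})\cdots L_N(zq^{1-N})\,A_N=\operatorname{qdet}L(z)\,T_k(zq^{1-k})\cdots T_1(z)\,A_N,
\]
which generalizes \eqref{3.13}.
\item Apply both sides to $e_{j_1}\otimes\cdots\otimes e_{j_N}$ and read off the coefficient of $e_{i_1}\otimes\cdots\otimes e_{i_N}$. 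The left side gives $L^{i_{k+1}\dots i_N}_{j_{k+1}\dots j_N}(zq^{-k})$ times a sign. The right side gives $\operatorname{qdet}L(z)\,T^{j_k\dots j_1}_{i_k\dots i_1}(z)$ times a sign.
\end{enumerate}

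The two signs are $\sgn$ of the permutations $(i_1,\dots,i_N)$ and $(j_1,\dots,j_N)$, i.e.\ $(-1)^{l(I,I^c)+l(J,J^c)}$. These must agree, or else be absorbed. I expect the formula is meant to hold with this sign convention built into the ordering used in the minors.

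The main obstacle is step 2: commuting $A_N$ through the $T$-product. The justification is that the inverse relation $(\widehat R^*)^{-1}T_1T_2=T_2T_1\widehat R^{-1}$ implies $A_N$ is a fusion projector for $T$ at the spectral shifts $z,zq^{-1},\dots$. This needs the fact that $\widehat R(q^{-1})$ and $\widehat R^*(q^{-1})$ are both proportional to the rank-$\binom N2$ antisymmetrizer $A_2$. That fact is the standard input from \cite{FIR 2018}, and it is independent of $p$, which is what makes the $R$ versus $R^*$ mismatch harmless.

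Once this holds, the $T$-minor is the corresponding coefficient of $T_k(zq^{1-k})\cdots T_1(z)A_N$, matching the paper's definition of $T^{a}_{b}$ with increasing arguments. Finally, apply $\omega_N^{-1}$, which sends $T$ to $L^{-1}$, to get the second equality in \eqref{4.8}.
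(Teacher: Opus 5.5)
The identity you aim for in step 4,
\[
L_{k+1}(zq^{-k})\cdots L_N(zq^{1-N})A_N=\operatorname{qdet}L(z)\,L_k(zq^{1-k})^{-1}\cdots L_1(z)^{-1}A_N,
\]
followed by a comparison of coefficients, is exactly the paper's strategy. However, your route to this identity does not work.

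\textbf{Steps 1--3.} If you multiply on the right, the $T$'s land to the right of $A_N$. A product over only $k<N$ of the tensor factors cannot be pushed through $A_N$: already for a numerical non-scalar matrix $g$ one has $A_N g_1\neq g_1A_N$, and this only works for a product over all $N$ factors. Even granting that step, $T_k\cdots T_1$ would then stand to the right of $L_{k+1}\cdots L_N$. The entries of $L_{k+1},\dots,L_N$ do not commute with those of $T_1,\dots,T_k$, so nothing cancels against $L_1\cdots L_k$.

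The correct derivation is elementary. Left-multiply $L_1(z)\cdots L_N(zq^{1-N})A_N=\operatorname{qdet}L(z)A_N$ successively by $L_1(z)^{-1}$, $L_2(zq^{-1})^{-1},\dots,L_k(zq^{1-k})^{-1}$, using only that $\operatorname{qdet}L(z)$ is central. The ``main obstacle'' you identify never arises: no fusion property for $T$ is needed, and neither are the values of $\widehat R,\widehat R^*$ at the fusion point.

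\textbf{Step 5.} This step is also wrong as stated. Since $A_N$ acts first, the input vector contributes only a sign. Since $L_{k+1},\dots,L_N$ act only on the last $N-k$ factors, the coefficient of $e_{i_1}\otimes\cdots\otimes e_{i_N}$ on the left is $\pm L^{i_{k+1}\dots i_N}_{i_{k+1}\dots i_N}(zq^{-k})$, with columns $I^c$ rather than $J^c$. On the right it is $\pm\operatorname{qdet}L(z)\,T^{i_k\dots i_1}_{i_k\dots i_1}(z)$. So you only recover the case $I=J$. You must instead read off the coefficient of the mixed vector $e_{j_1}\otimes\cdots\otimes e_{j_k}\otimes e_{i_{k+1}}\otimes\cdots\otimes e_{i_N}$, e.g.\ after applying both sides to $e_1\otimes\cdots\otimes e_N$, as the paper does.

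\textbf{The sign.} The sign you flag is genuine and cannot be absorbed into the minor conventions. The resulting identity is
\[
L^{i_{k+1},\dots,i_N}_{j_{k+1},\dots,j_N}(zq^{-k})=(-1)^{l(I,I^c)+l(J,J^c)}\operatorname{qdet}L(z)\,(L^{-1})^{j_k,\dots,j_1}_{i_k,\dots,i_1}(z).
\]
For example, with $N=2$, $I=\{1\}$, $J=\{2\}$ one gets $L_{21}(zq^{-1})=-\operatorname{qdet}L(z)\,(L^{-1})_{21}(z)$, matching the classical $\det A\,(A^{-1})_{21}=-a_{21}$. The printed statement and the paper's own coefficient comparison both drop this factor; the paper defines $l(I,J)$ just before the theorem but never uses it. You should state the sign explicitly rather than expect it to cancel.
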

	\begin{proof}
		Multiplying $(L_1(z))^{-1}, \ldots, (L_k(zq^{1-k}))^{-1}$ on the left of the formula \eqref{qdet}, we get that
		\[
		L_{k+1}(zq^{-k})\cdots L_N(zq^{1-N})A_N
		= \operatorname{qdet}L(z)\, L_k(zq^{1-k})^{-1}\cdots L_1(z)^{-1} A_N.
		\]
		Applying both sides to the vector $e_1 \otimes e_2 \otimes \cdots \otimes e_N$,
		\[
		\sum_{a_{k+1},\ldots,a_N}\;\sum_{\sigma\in S_N}\sgn(\sigma)\,
		L_{a_{k+1},\sigma(k+1)}(zq^{-k})\cdots L_{a_N,\sigma(N)}(zq^{1-N})
		\otimes e_{\sigma(1)} \otimes \cdots \otimes e_{\sigma(k)} \otimes e_{a_{k+1}} \otimes \cdots \otimes e_{a_N}
		\]
		\[
		= \operatorname{qdet}L(z)\,
		\sum_{b_1,\ldots,b_k}\;\sum_{\sigma\in S_N}\sgn(\sigma)\,
		L_{b_k,\sigma(k)}(zq^{1-k})^{-1}\cdots L_{b_1,\sigma(1)}(z)^{-1}
		\otimes e_{b_1} \otimes \cdots \otimes e_{b_k} \otimes e_{\sigma(k+1)} \otimes \cdots \otimes e_{\sigma(N)},
		\]
		then comparing the coefficient of
		$e_{j_1}\otimes \cdots \otimes e_{j_k} \otimes e_{i_{k+1}} \otimes \cdots \otimes e_{i_N}$,
		\[
		\sum_{\sigma\in S_{N-k}}\sgn(\sigma)\,
		L_{i_{k+1},j_{\sigma(k+1)}}(zq^{-k})\cdots
		L_{i_N,j_{\sigma(N)}}(zq^{1-N})
		\otimes e_{j_1}\otimes \cdots \otimes e_{j_k}\otimes e_{i_{k+1}}\otimes \cdots \otimes e_{i_N}
		\]
		\[
		= \operatorname{qdet}L(z)\,
		\sum_{\sigma\in S_k}\sgn(\sigma)\,
		L_{j_k,i_{\sigma(k)}}(zq^{1-k})^{-1}\cdots
		L_{j_1,i_{\sigma(1)}}(z)^{-1}
		\otimes e_{j_1}\otimes \cdots \otimes e_{j_k}\otimes e_{i_{k+1}}\otimes \cdots \otimes e_{i_N},
		\]
		we obtain
		\[
		\sum_{\sigma\in S_{N-k}}\sgn(\sigma)\,
		L_{i_{k+1},j_{\sigma(k+1)}}(zq^{-k})\cdots
		L_{i_N,j_{\sigma(N)}}(zq^{1-N})
		\]
		\[
		= \operatorname{qdet}L(z)\,
		\sum_{\sigma\in S_k}\sgn(\sigma)\,
		L_{j_k,i_{\sigma(k)}}(zq^{1-k})^{-1}\cdots
		L_{j_1,i_{\sigma(1)}}(z)^{-1}.
		\]
		If we write this in the form of quantum minors, we get \eqref{4.8}.
	\end{proof}
	
	As a special case with $I=J=N$,  we have the following corollary.
	\begin{corollary}
		\begin{equation} \label{4.9}
			\operatorname{qdet}(L(z))
			(\operatorname{qdet}(L^{-1}(z)))=1.
		\end{equation}
	\end{corollary}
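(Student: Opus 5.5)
The corollary is the extreme case of Theorem~\ref{Jacobi's ratio theorem} in which the complementary sets are empty. Take $k=N$ and $I=J=\{1,\dots,N\}$, so that $I^c=J^c=\emptyset$. The plan has three steps.

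First, read off the left-hand side of \eqref{4.8}. The quantum minor indexed by the empty sets is the empty product, which equals $1$. This convention must be checked against the proof of Theorem~\ref{Jacobi's ratio theorem}. When $k=N$ one multiplies \eqref{qdet} on the left by $L_N(zq^{1-N})^{-1}\cdots L_1(z)^{-1}$. The resulting identity is
\[
A_N=\operatorname{qdet}L(z)\,L_N(zq^{1-N})^{-1}\cdots L_1(z)^{-1}A_N .
\]
So the left-hand side is literally $A_N$, and the coefficient comparison yields $1$. This is valid because $\operatorname{qdet}L(z)$ is central, so it can be moved past the inverses.

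Second, identify the right-hand side. Apply both sides to $e_1\otimes\cdots\otimes e_N$ and compare the coefficients of $e_1\otimes\cdots\otimes e_N$. The right-hand side becomes
\[
\operatorname{qdet}L(z)\sum_{\sigma\in S_N}\sgn(\sigma)\,(L^{-1})_{N,\sigma(N)}(zq^{1-N})\cdots(L^{-1})_{1,\sigma(1)}(z).
\]
This is exactly $\operatorname{qdet}L(z)\,(\omega_N)^{-1}(\operatorname{qdet}T(z))$. Indeed, $(\omega_N)^{-1}$ sends $T(z)$ to $L^{-1}(z)$, and the displayed sum is the defining expansion of $\operatorname{qdet}T(z)$, with factors ordered from $T_{N,\cdot}(zq^{1-N})$ to $T_{1,\cdot}(z)$. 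This is the meaning of $\operatorname{qdet}(L^{-1}(z))$ in \eqref{4.9}: the image of $\operatorname{qdet}T(z)$, equivalently the quantum minor $T^{N,\dots,1}_{N,\dots,1}(z)$ pulled back. The two orderings of the index sets in \eqref{4.8} only introduce the sign $\sgn$ of the order reversal twice, and these signs cancel.

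Third, combine the two steps to obtain $1=\operatorname{qdet}L(z)\operatorname{qdet}(L^{-1}(z))$. Since $\operatorname{qdet}L(z)$ is central, the product can be written in either order.

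The main point requiring care is bookkeeping rather than analysis. One must check that the ordering of spectral parameters in $\operatorname{qdet}T$ matches the ordering produced by inverting $L_1(z)\cdots L_N(zq^{1-N})$. Inverting reverses the order to $L_N(zq^{1-N})^{-1}\cdots L_1(z)^{-1}$, which agrees with the definition of $\operatorname{qdet}T(z)$ given before $\omega_N$. One must also check that the sign conventions in \eqref{4.8} agree for $I=J=[1,N]$.
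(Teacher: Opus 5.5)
Your proposal is correct and follows the paper's route. The paper obtains the corollary simply as the special case $I=J=\{1,\dots,N\}$ (so $k=N$ with empty complements) of Jacobi's ratio theorem, which is exactly your specialization. Your extra checks make explicit the bookkeeping the paper leaves implicit: rederiving the degenerate case from the proof so that the empty minor is $1$, matching the reversed product $L_N(zq^{1-N})^{-1}\cdots L_1(z)^{-1}$ with the definition of $\operatorname{qdet}T(z)$, and noting that the order-reversal signs cancel.
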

	
	The action of $\psi_k$ on the quantum minors is described in the following theorem.
	\begin{theorem}
		We have
		\begin{equation}\label{4.10}
			\psi_k: \, L^{a_1\cdots a_m}_{b_1\cdots b_m}(z) \longmapsto
			\bigl( L^{1\cdots k}_{1\cdots k}(zq^{-m}) \bigr)^{-1} \cdot
			L^{1\cdots k,\, k+a_1,\, \dots,\, k+a_m}_{1\cdots k,\, k+b_1,\, \dots,\, k+b_m}(z),
		\end{equation}
		where $a_i,b_i \in \{1,\dots,N\}$.
	\end{theorem}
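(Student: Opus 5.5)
We follow the Yangian argument (Molev, Theorem 1.11.3 / Lemma 1.11.2), with Lemma \ref{Lemma 4.1.} and Theorem \ref{Jacobi's ratio theorem} as the main tools.

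\emph{Step 1: the minor on the left as a minor of $T$.} Apply Theorem \ref{Jacobi's ratio theorem} inside $\mathcal{A}_{q,p}(\widehat{\mathfrak{gl}}_N)$ with the complementary sets chosen so that $\{i_{k+1},\dots,i_N\}=\{a_1,\dots,a_m\}$ and $\{j_{k+1},\dots,j_N\}=\{b_1,\dots,b_m\}$ (so here $k$ is replaced by $N-m$). This expresses $L^{a_1\cdots a_m}_{b_1\cdots b_m}$ at a shifted argument as $\operatorname{qdet}L$ times $(\omega_N)^{-1}$ of the complementary minor of $T$, namely $T^{b^c}_{a^c}$ with reversed index order. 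We first treat increasing index sequences. For general $a_i,b_i$ we then use antisymmetry of quantum minors in upper and lower indices, which follows from $A_m$ absorbing transpositions. Minors with repeated indices vanish on both sides.

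\emph{Step 2: apply $\psi_k$.} Since $\psi_k=(\omega_{N+k})^{-1}\circ\varphi_k\circ\omega_N$ and $\varphi_k$ shifts every index by $k$, the $T$-minor $T^{b^c}_{a^c}$ is sent to $T^{k+b^c}_{k+a^c}$ in $\mathcal{B}_{N+k}$. Pulling back by $(\omega_{N+k})^{-1}$ gives the corresponding minor of $L^{-1}$ of size $N+k$. Next apply Theorem \ref{Jacobi's ratio theorem} again, now in $\mathcal{A}_{q,p}(\widehat{\mathfrak{gl}}_{N+k})$, with the complementary index sets $\{1,\dots,k,k+a_1,\dots,k+a_m\}$ and $\{1,\dots,k,k+b_1,\dots,k+b_m\}$. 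This gives
\[
(\omega_{N+k})^{-1}\bigl(T^{k+b^c}_{k+a^c}(z)\bigr)=\bigl(\operatorname{qdet}L(z)\bigr)^{-1}_{N+k}\,L^{1\cdots k,k+a_1,\dots,k+a_m}_{1\cdots k,k+b_1,\dots,k+b_m}(zq^{m-N}),
\]
where $\bigl(\operatorname{qdet}L\bigr)_{N+k}$ denotes the quantum determinant in $\mathcal{A}_{q,p}(\widehat{\mathfrak{gl}}_{N+k})$.

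\emph{Step 3: the image of $\operatorname{qdet}L$.} We still need $\psi_k(\operatorname{qdet}L(z))$. Set $m=N$ in the argument of Steps 1–2, or equivalently use Corollary (4.9) together with $\varphi_k(\operatorname{qdet}T)=T^{k+1\cdots k+N}_{k+1\cdots k+N}$ and Jacobi's theorem in size $N+k$ with $I=J=\{k+1,\dots,N+k\}$. This gives
\[
\psi_k(\operatorname{qdet}L(z))=\bigl(L^{1\cdots k}_{1\cdots k}(\text{shifted }z)\bigr)^{-1}\operatorname{qdet}L^{(N+k)}(z).
\]
Substituting into Step 2, the two central $(N+k)$-quantum determinants cancel, since their coefficients are central by \cite{FIR 2018}. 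What remains is $\bigl(L^{1\cdots k}_{1\cdots k}\bigr)^{-1}$ times the desired $(k+m)$-minor. A final rescaling of $z$ restores the original argument, and the $q$-shifts should then match to give $zq^{-m}$ on the $k\times k$ minor.

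\emph{Main obstacle.} The delicate part is the bookkeeping of arguments and signs. Theorem \ref{Jacobi's ratio theorem} shifts $z$ by $q^{-k}$ and reverses index orders, and the quantum minors of $T$ and of $L$ use opposite orderings of spectral shifts. The signs $(-1)^{l(I,J)}$ from reordering must cancel between the two applications, and the shifts must combine to give exactly $zq^{-m}$ on the $k\times k$ minor. We will check this carefully in the case $m=1$. In that case the statement should reduce to Lemma \ref{Lemma 4.1.} via the quasideterminant factorization $|A|_{ij}=\bigl(L^{1\cdots k}_{1\cdots k}\bigr)^{-1}L^{1\cdots k,k+i}_{1\cdots k,k+j}$, which gives a consistency check on the shifts.
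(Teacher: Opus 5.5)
Your proposal is correct and follows essentially the same route as the paper. Like the paper, you use Jacobi's ratio theorem in size $N$ to write the minor as $\operatorname{qdet} L$ times $(\omega_N)^{-1}$ of the complementary $T$-minor, push through $\varphi_k$, and apply Jacobi's theorem twice in size $N+k$ (for the shifted complementary minor and for $T^{k+1\cdots k+N}_{k+1\cdots k+N}$, whose complement is $\{1,\dots,k\}$); the central $(N+k)$-determinants then cancel, and the bookkeeping you flag works out: after $z\mapsto zq^{N-m}$ the $k\times k$ minor sits at $zq^{-m}$, and the reordering signs (which the paper's statement of Jacobi's theorem suppresses) are $(-1)^{l(I,I^c)+l(J,J^c)}$ with $I^c=\{a_1,\dots,a_m\}$, $J^c=\{b_1,\dots,b_m\}$ in size $N$ and acquire only an extra even number $2k(N-m)$ of inversions in size $N+k$, so they cancel.
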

	\begin{proof}
		Apply the homomorphism $\omega_N$ to \eqref{4.8} we have
		\begin{equation}\label{4.11}
			\omega_N\Bigl( L^{i_{k+1},\dots,i_N}_{j_{k+1},\dots,j_N}(zq^{-k}) \Bigr)
			= \omega_N\Bigl( \operatorname{qdet}L(z) \,\Bigr) T^{j_k,\dots,j_1}_{i_k,\dots,i_1}(z) ,
		\end{equation}
		and then apply $\omega_N$ to \eqref{4.9}, we have $\omega_N (\qdet L(z)) \qdet T(z)=1$,
		thus when we take ${i_{k+1},\dots,i_N}={a_1,\cdots a_m}$ and ${j_{k+1},\dots,j_N}={b_1,\cdots, b_m}$, \eqref{4.11} becomes
		\begin{equation}\label{4.12}
			\omega_N\Bigl( L^{a_1\cdots a_m}_{b_1\cdots b_m}(z) \Bigr)
			= \bigl( \operatorname{qdet}T(zq^{N-m}) \bigr)^{-1}
			\cdot T^{b_{m+1},\dots,b_N}_{a_{m+1},\dots,a_N}(zq^{N-m}),
		\end{equation}
		then apply $\varphi_k$ to \eqref{4.12},
		\begin{equation}\label{4.13}
			\varphi_k\Bigl( \omega_N\bigl( L^{a_1\cdots a_m}_{b_1\cdots b_m}(z) \bigr) \Bigr)
			=(T^{k+1,\dots,k+N}_{k+1,\dots,k+N}(zq^{N-m}))^{-1}
			\cdot T^{k+b_{m+1},\dots,k+b_N}_{k+a_{m+1},\dots,k+a_N}(zq^{N-m}),
		\end{equation}
		finally, apply $(\omega_{N+k})^{-1}$ to \eqref{4.13}, we obtain
		\begin{equation}\label{4.14}
			\begin{aligned}
				\psi_k\Bigl( L^{a_1\cdots a_m}_{b_1\cdots b_m}(z) \Bigr)
				&= (\omega_{N+k})^{-1}\Bigl(
				(T^{k+1,\dots,k+N}_{k+1,\dots,k+N}(zq^{N-m}))^{-1}
				\cdot \,
				T^{k+b_{m+1},\dots,k+b_N}_{k+a_{m+1},\dots,k+a_N}(zq^{N-m})
				\Bigr) \\
				&= \bigl( L^{1\cdots k}_{1\cdots k}(zq^{-m}) \bigr)^{-1} \cdot
				L^{1\cdots k,\, k+a_1,\, \dots,\, k+a_m}_{1\cdots k,\, k+b_1,\, \dots,\, k+b_m}(z),
			\end{aligned}
		\end{equation}
		where we have used the equation \eqref{4.8} again.
	\end{proof}
	
	Take $m=1$, we have the following corollary.
	\begin{corollary}\label{Corollary 4.5.}
		For any $1\leq i,j\leq N$, we have
		\begin{equation}\label{4.15}
			\psi_k: \, L_{ij}(z) \longmapsto
			\bigl( L^{1\cdots k}_{1\cdots k}(zq^{-1}) \bigr)^{-1}
			L^{1\cdots k,\, k+i}_{1\cdots k,\, k+j}(z).
		\end{equation}
	\end{corollary}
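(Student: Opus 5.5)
The corollary is the case $m=1$ of the preceding theorem, formula \eqref{4.10}, so I will specialize that formula and check that the notation and the spectral shift come out as stated.

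First I set $m=1$, $a_1=i$, $b_1=j$ in \eqref{4.10}. By the definition of quantum minors, a $1\times 1$ minor is just the generator itself, $L^{i}_{j}(z)=L_{ij}(z)$. This is because the sum over $S_1$ has a single term, and the spectral argument $zq^{1-m}$ equals $z$ when $m=1$. So the left-hand side of \eqref{4.10} is $\psi_k(L_{ij}(z))$.

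On the right-hand side, the prefactor becomes $\bigl(L^{1\cdots k}_{1\cdots k}(zq^{-1})\bigr)^{-1}$, since $zq^{-m}=zq^{-1}$. The remaining factor becomes the $(k+1)$-minor $L^{1\cdots k,\,k+i}_{1\cdots k,\,k+j}(z)$. Together these give exactly \eqref{4.15}.

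The one point to check is the shift in the prefactor. In the proof of the preceding theorem, \eqref{4.8} is applied with $k$ replaced by $N-m$, which produces the argument $zq^{N-m}$ in \eqref{4.12}. Pulling back via $(\omega_{N+k})^{-1}$ and using \eqref{4.8} in $\mathcal{A}_{q,p}(\widehat{\mathfrak{gl}}_{N+k})$ then yields the minor $L^{1\cdots k}_{1\cdots k}$ evaluated at $zq^{-m}$. For $m=1$ this is $zq^{-1}$, consistent with \eqref{4.15}.

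As an independent sanity check, the formula should agree with Lemma \ref{Lemma 4.1.}. By the quasideterminant expansion, the boxed quasideterminant there equals the ratio of the $(k+1)$-minor to the $k$-minor with the indicated shift. This is a Schur-complement argument, analogous to \eqref{4.3}. However, this check is not needed for the proof, which is a direct substitution.
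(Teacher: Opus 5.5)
Your proposal is correct and matches the paper's own argument: the paper likewise obtains the corollary by taking $m=1$ in \eqref{4.10}, using $L^{i}_{j}(z)=L_{ij}(z)$ and $zq^{-m}=zq^{-1}$. The cross-check against Lemma~\ref{Lemma 4.1.} is not needed for the proof, as you note yourself.
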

	
	For any $1 \leq i,j \leq N$, introduce the following series with coefficients in $\mathcal{A}_{q,p}(\widehat{\mathfrak{gl}}_{N+k})$
	\[
	L_{ij}^{\#}(z) = L^{1\cdots k,\, k+i}_{1\cdots k,\, k+j}(z)
	\]
	and combine them into the matrix $L^{\#}(z) = [L_{ij}^{\#}(z)]$. Let $L^{(k)}(z)$ be the submatrix of $L(z)$ determined by the first $k$ rows and columns.
	The following is an analog of Sylvester's theorem for the quantum determinants of the elliptic quantum algebra
	$\mathcal{A}_{q,p}(\widehat{\mathfrak{gl}}_N)$ \cite{Molev 2007}.
	
	\begin{theorem}
		The mapping
		\begin{equation}\label{4.16}
			L_{ij}(z) \longmapsto L_{ij}^{\#}(z), \quad 1 \leq i,j \leq N,
		\end{equation}
		defines a homomorphism $\mathcal{A}_{q,p}(\widehat{\mathfrak{gl}}_N) \to \mathcal{A}_{q,p}(\widehat{\mathfrak{gl}}_{N+k}).$
		Moreover, we have the identity
		\begin{equation}\label{4.17}
			\mathrm{qdet}\, L^{\#}(z)
			= \mathrm{qdet}\, L(z) \cdot \mathrm{qdet}\, L^{(k)}(zq^{-1}) \cdots \mathrm{qdet}\, L^{(k)}(zq^{1-N}).
		\end{equation}
	\end{theorem}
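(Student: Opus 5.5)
Following Molev's argument for the Yangian (Theorem 1.11.? in \cite{Molev 2007}), I would obtain both statements from Corollary \ref{Corollary 4.5.}, which gives $\psi_k(L(z)) = \bigl(L^{1\cdots k}_{1\cdots k}(zq^{-1})\bigr)^{-1} L^{\#}(z)$.

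\emph{The homomorphism.} The map $\psi_k$ is a composition of homomorphisms, so it is a homomorphism $\mathcal{A}_{q,p}(\widehat{\gl}_N)\to\mathcal{A}_{q,p}(\widehat{\gl}_{N+k})$. Since $\omega_N$, $\varphi_k$ and $\omega_{N+k}^{-1}$ respect the defining $RLL$-type relations, the matrix $\psi_k(L(z))$ satisfies \eqref{2.3}. Set $c(z)=L^{1\cdots k}_{1\cdots k}(zq^{-1})$. This is the quantum determinant of the subalgebra generated by $L_{ab}(z)$ with $a,b\le k$. The first step is to show that the coefficients of $c(z)$ commute with every $L^{\#}_{ij}(w)$. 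To do this I would apply Theorem \ref{Jacobi's ratio theorem} inside $\mathcal{A}_{q,p}(\widehat{\gl}_{N+k})$: it expresses $L^{\#}_{ij}$, up to a central factor $\operatorname{qdet}L(z)$, as a quantum minor of $L^{-1}$ whose rows and columns lie in $\{k+1,\dots,k+N\}$. Such minors commute with the qdet of the top-left $k\times k$ corner, by the standard fusion argument in which quantum minors with disjoint index sets commute. Once this is known, $L^{\#}(z)=c(z)\,\psi_k(L(z))$ with $c(z)$ commuting with $\psi_k(L(w))$. Because $c(z)$ is scalar in the matrix spaces, it passes through $\widehat{R}$. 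The relation \eqref{2.3} for $L^{\#}$ then follows provided the scalar factors $c(z)c(w)$ match on both sides, which they do because both sides contain the product of the same two commuting series. So the mapping \eqref{4.16} is a homomorphism.

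\emph{The identity \eqref{4.17}.} I would apply $\psi_k$ to $\operatorname{qdet}L(z)$ in two ways. First, by \eqref{4.10} with $m=N$ and $a=b=(1,\dots,N)$,
\[
\psi_k(\operatorname{qdet}L(z)) = \bigl(\operatorname{qdet}L^{(k)}(zq^{-N})\bigr)^{-1}\operatorname{qdet}L(z),
\]
where on the right $\operatorname{qdet}L(z)$ denotes the full quantum determinant in $\mathcal{A}_{q,p}(\widehat{\gl}_{N+k})$. Second, substitute $\psi_k(L_{ij}(z))=c(z)^{-1}L^{\#}_{ij}(z)$ into the expansion \eqref{3.8}. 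Using that $c$ commutes with the $L^{\#}$ entries, the scalars factor out:
\[
\psi_k(\operatorname{qdet}L(z))=c(z)^{-1}c(zq^{-1})^{-1}\cdots c(zq^{1-N})^{-1}\operatorname{qdet}L^{\#}(z).
\]
Equating the two expressions and cancelling the common factor $c(zq^{1-N})=\operatorname{qdet}L^{(k)}(zq^{-N})$ gives
\[
\operatorname{qdet}L^{\#}(z)=\operatorname{qdet}L(z)\,\operatorname{qdet}L^{(k)}(zq^{-1})\cdots\operatorname{qdet}L^{(k)}(zq^{1-N}),
\]
which is \eqref{4.17}. I still need to verify the index shifts carefully, since the paper's conventions for $L$-minors (arguments decreasing) and $T$-minors (arguments increasing) differ.

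\emph{Main obstacle.} The hard step is the commutation of $c(z)$ with the $L^{\#}_{ij}(w)$, together with the fact that the needed commutation relations of quantum minors hold for the elliptic $R$-matrix and not only for the Yangian one. If the disjoint-index argument is delicate, I would try first to prove $[c(z),\psi_k(L(w))]=0$ directly. That amounts to showing $[\operatorname{qdet}T^{(k)}$-type elements, $\varphi_k(T(w))]=0$ in $\mathcal{B}_{N+k}$, derived from fusion of the $R$-matrix relation restricted to the first $k$ indices.
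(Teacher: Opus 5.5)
Your overall structure is the paper's. You write $\psi_k(L_{ij}(z))=c(z)^{-1}L^{\#}_{ij}(z)$ with $c(z)=\mathrm{qdet}\,L^{(k)}(zq^{-1})$, use $[c(z),L^{\#}_{ij}(w)]=0$ to transport the $RLL$ relation, and compute $\psi_k(\mathrm{qdet}\,L(z))$ once by \eqref{4.10} and once by expanding \eqref{3.8}. Your bookkeeping there is right: it reproduces \eqref{4.19}, \eqref{4.20} and the cancellation of $\mathrm{qdet}\,L^{(k)}(zq^{-N})$.

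The gap is the step you flag, $[c(z),L^{\#}_{ij}(w)]=0$.
\begin{itemize}
\item ``Quantum minors with disjoint index sets commute'' is not a valid principle. For minors of a single matrix it already fails for Yangian generators: $[t_{11}(u),t_{22}(v)]\neq 0$ in $\mathrm{Y}(\gl_2)$.
\item The version you actually need is that minors of $L$ on $\{1,\dots,k\}$ commute with minors of $L^{-1}$ on $\{k+1,\dots,k+N\}$. This is a Yangian fact coming from $R(u)=1-P/u$: there the commutator of $t_{ij}(u)$ with the $(r,s)$ entry of $T(v)^{-1}$ only contains terms weighted by $\delta_{is}$ or $\delta_{jr}$. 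The entries $R^{b,d}_{a,c}$ of the elliptic $\widehat{R}$ are generically nonzero whenever $a+c\equiv b+d$. So the exchange relation between $L_1(z)$ and $L_2(w)^{-1}$ mixes the blocks, and nothing in the paper supplies that commutation.
\item Your fallback, if it means the quantum determinant of the top-left block of $T$, aims at the wrong element. $\omega_{N+k}(c(z))$ is built from the top-left block of $T^{-1}$. By Jacobi's ratio theorem it is, up to a central factor, $\varphi_k$ of the quantum determinant of $T$, i.e.\ the determinant of the complementary block. The top-left determinant of $T$ does not commute with $\varphi_k(T(w))$; for Yangians, $k=N=1$ is the example above.
\end{itemize}

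The paper closes the step differently. It asserts that a quantum minor $L^{a_1\cdots a_m}_{b_1\cdots b_m}$ is central in the subalgebra generated by the $L_{a_ib_j}$. Applied to $L^{\#}_{ij}(w)$, this makes it commute with every $L_{ab}(z)$, $a,b\le k$, hence with $c(z)$. That assertion is itself a sub-block statement carried over from the Yangian.

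Within your own route there is a shorter fix that uses only \eqref{4.10} and centrality of quantum determinants. Write $D_M$ for the quantum determinant of $\mathcal{A}_{q,p}(\widehat{\gl}_M)$. Then \eqref{4.19} at the point $zq^{N-1}$ reads
\[
c(z)=D_{N+k}(zq^{N-1})\,\psi_k\bigl(D_N(zq^{N-1})\bigr)^{-1}.
\]
The first factor is central. The second is the inverse of the $\psi_k$-image of a central element. So $c(z)$ commutes with every $\psi_k(L_{ij}(w))$ and with $c(w)$, hence with $L^{\#}_{ij}(w)=c(w)\,\psi_k(L_{ij}(w))$. This is exactly the direct commutation your fallback was after. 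It also gives the commutativity of $c(z)$ and $c(w)$ that you used, without needing $c$ to be the determinant of a sub-block subalgebra.
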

	\begin{proof}
		By \eqref{4.15}, we have
		\begin{equation}\label{4.18}
			\psi_k: L_{ij}(z) \longmapsto \bigl( \mathrm{qdet}\, L^{(k)}(zq^{-1}) \bigr)^{-1} \cdot L_{ij}^{\#}(z).
		\end{equation}
		Since $\psi_k$ is a homomorphism and $\left[ L_{a_i b_j}(z), L^{a_1\cdots a_m}_{b_1\cdots b_m}(w) \right] = 0$, $i,j \in \{1,\dots,m\}$ (since we can regard the coefficients of $L_{a_i b_j}(z)$, $i,j \in \{1,\dots,m\}$ generate a  subalgebra of $\mathcal{A}_{q,p}(\widehat{\mathfrak{gl}}_N)$, where $L_{b_1\cdots b_m}^{a_1\cdots a_m}(z)$ belong to the center of the subalgebra). Thus we have that the mapping \eqref{4.16} defines a homomorphism. Furthermore, by \eqref{4.10},
		\begin{equation}\label{4.19}
			\psi_k: L^{1\cdots N}_{1\cdots N}(z) \longmapsto \bigl( \mathrm{qdet}\, L^{(k)}(zq^{-N}) \bigr)^{-1} \cdot \mathrm{qdet}\, L(z).
		\end{equation}
		On the other hand, expanding the quantum minors, we obtain from \eqref{4.18} that
		\begin{equation}\label{4.20}
			\psi_k: L^{1\cdots N}_{1\cdots N}(z) \longmapsto
			\bigl( \mathrm{qdet}\, L^{(k)}(zq^{-1}) \cdots \mathrm{qdet}\, L^{(k)}(zq^{-N}) \bigr)^{-1}
			\cdot \mathrm{qdet}\, L^{\#}(z),
		\end{equation}
		thus, comparing the image of \eqref{4.19} and \eqref{4.20}, we get \eqref{4.17}.

	\end{proof}
	
	\bigskip
\centerline{\bf Acknowledgments}
\medskip

The work is supported in part by the National Natural Science Foundation of China (grant no. 12571026).
			
			\bibliographystyle{amsalpha}

		\end{document}